\title{\bf Isolated singularities of conformal hyperbolic metrics}
\author{Yu Feng$^1$, Yiqian Shi$^2$ and Bin Xu$^3$}
\def\nd{\noindent}
\newtheorem{thm}{Theorem}[section]
\newtheorem{lem}{Lemma}[section]
\newtheorem{defi}{Definition}[section]
\newtheorem{rem}{Remark}[section]
\begin{document}

\maketitle

\nd{\small  $^{1,2,3}$Wu Wen-Tsun Key Laboratory of Mathematics, USTC, Chinese Academy of Sciences\\
$^{1,2,3}$School of Mathematical Sciences, University of Science and Technology of China.\\
No. 96 Jinzhai Road, Hefei, Anhui Province\  230026\  P. R. China.}

\nd {\small $^{1}$ yuf@mail.ustc.edu.cn \quad  $^{2}$ yqshi@ustc.edu.cn\quad  $^3$\Envelope bxu@ustc.edu.cn}
\par\vskip0.5cm

\nd {\small {\bf Abstract:}  J. Nitsche proved that an isolated singularity of a conformal hyperbolic metric is either a conical singularity or a cusp one ({\small \emph{Math. Z. } {\bf 68} (1957) 316-324}).
We prove by developing map that there exists a complex coordinate $z$ centered at the singularity where the metric has the expression of either $\displaystyle{\frac{4\alpha^2\vert z \vert^{2\alpha-2}}{(1-\vert z \vert ^{2\alpha})^2}\vert \mathrm{d} z \vert^2}$ with $\alpha>0$ or $\displaystyle{\vert z \vert ^{-2}\big(\ln|z|\big)^{-2}|dz|^{2}}$.\\

\nd{\bf Keywords.} hyperbolic metric, conical singularity, cusp singularity, developing map\\

\nd {\bf 2010 Mathematics Subject Classification.} Primary 51M10; Secondary 34M35

\section{Introduction}

\paragraph{} Let $\Sigma$ be a compact Riemann surface and $\textup{D}=\sum\limits_{i=1}^n(\theta_{i}-1)p_{i}$ a ${\Bbb R}$-divisor on $\Sigma$ such that $0\leq \theta_i\not=1$. We call that a smooth conformal metric $\mathrm{d}\sigma^2$ on
$\Sigma\backslash {\rm supp}\, D=\Sigma\backslash \{p_1,\cdots, p_n\}$
{\it represents the divisor} $D$  if the following condition is satisfied:
\begin{itemize}
                 \item \ If $\theta_i>0$, then $\mathrm{d} \sigma^2$\ {\it has a conical singularity at $p_{i}$ with cone angle\ $2\pi\theta_{i}>0$}. That is, in a neighborhood $U$ of  $p_{i}$, $\mathrm{d} \sigma^2=e^{2u}\vert \mathrm{d} z \vert^2$, where $z$ is a complex coordinate of $U$  with\ $z(p_{i})=0$ and\ $u-(\theta_i-1)\ln \vert z \vert$\ extends to a continuous function in $U$.

         \item \ If $\theta_i=0$, then $\mathrm{d} \sigma^2$\ {\it has a cusp singularity at\ $p_{i}$}. That is, in a neighborhood $V$ of\ $p_{i}$,\ $\mathrm{d} \sigma^2=e^{2u}\vert \mathrm{d} z \vert^2$, where\ $z$\ is a complex coordinate of $V$ with\ $z(p_{i})=0$ and\ $u+\ln\, \vert z \vert+\ln\,(-\ln\, \vert z \vert)$\ extends to a continuous function in $V$.

 \end{itemize}
\noindent We observe that the metric $\mathrm{d} \sigma^2$ has finite (infinite) diameter near a conical (cusp) singularity.
J. Nitsche \cite{N57}, M. Heins \cite{HE62} and A. Yamada \cite{Ya88} proved that an isolated singularity of a conformal hyperbolic metric must be either a conical singularity or a cusp one. By the Gauss-Bonnet formula, if\ $\mathrm{d} \sigma^2$\ is a conformal hyperbolic metric representing $\textup{D}$, then $\chi(\Sigma)+\sum\limits_{i=1}^n(\theta_{i}-1)<0$, where $\chi(\Sigma)$ is the Euler number of $\Sigma$. M. Heins \cite{HE62} proved by S-K metric the following existence theorem.

\begin{thm}
\label{thm:rat}
There exists a unique conformal hyperbolic metric representing a $\mathbb{R}$-divisor\ $\textup{D}=\sum\limits_{i=1}^n(\theta_{i}-1)p_{i}$ with $0\leq \theta_{i}\not=1$ on a compact Riemann surface $\Sigma$ if and only if\ $\chi(\Sigma)+\sum\limits_{i=1}^n(\theta_{i}-1)<0$.
\end{thm}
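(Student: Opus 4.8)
The plan is to prove the two implications separately: necessity is a Gauss--Bonnet computation, while sufficiency and uniqueness follow the Perron-type scheme with Heins' SK-metrics that is alluded to just above the statement, the uniqueness part being concluded by a maximum-principle argument. For necessity, suppose $\mathrm d\sigma^2$ is a conformal hyperbolic metric on $\Sigma^\ast:=\Sigma\setminus\{p_1,\dots,p_n\}$ representing $\textup{D}$. From the local normal forms one checks that $\mathrm d\sigma^2$ has finite area near each $p_i$ (also at a cusp, since $\int_0 r^{-1}(\ln r)^{-2}\,\mathrm dr<\infty$) and that the total geodesic curvature of the circle $|z_i|=\varepsilon$ tends to $2\pi\theta_i$ as $\varepsilon\to0^+$ (with $\theta_i=0$ for a cusp). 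Applying Gauss--Bonnet on $\Sigma$ with the discs $\{|z_i|<\varepsilon\}$ deleted and letting $\varepsilon\to0^+$ yields
\[
 -\operatorname{Area}\!\big(\Sigma,\mathrm d\sigma^2\big)=2\pi\Big(\chi(\Sigma)+\sum_{i=1}^{n}(\theta_i-1)\Big),
\]
so the left side being negative forces $\chi(\Sigma)+\sum_i(\theta_i-1)<0$.

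For sufficiency, assume the strict inequality. For each $i$ fix a coordinate disc about $p_i$ and let $\rho_i$ be the model metric $\tfrac{4\theta_i^2|z_i|^{2\theta_i-2}}{(1-|z_i|^{2\theta_i})^2}|\mathrm dz_i|^2$ when $\theta_i>0$, resp.\ $|z_i|^{-2}(\ln|z_i|)^{-2}|\mathrm dz_i|^2$ when $\theta_i=0$; each $\rho_i$ is a complete hyperbolic metric on a punctured disc. Consider the Perron family $\mathcal S$ of SK-metrics $\rho\,|\mathrm dz|$ on $\Sigma^\ast$ (conformal pseudo-metrics of Gaussian curvature $\le-1$ in Heins' support sense) that are locally bounded above on $\Sigma^\ast$ and satisfy $\rho\le C_i\rho_i$ near $p_i$ for suitable constants $C_i$. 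The hypothesis $\chi(\Sigma)+\sum_i(\theta_i-1)<0$ is exactly what guarantees $\mathcal S\neq\varnothing$ — equivalently, the existence of a conformal metric on $\Sigma^\ast$ of curvature $\le-1$ with the prescribed singular type, which one builds by patching the $\rho_i$ near the $p_i$ to a suitably rescaled background metric and taking the pointwise minimum (an infimum of finitely many $C^2$ metrics of curvature $\le-1$ is again SK), a Gauss--Bonnet obstruction showing the inequality is also necessary for this. Set $\Lambda:=\big(\sup_{\rho\in\mathcal S}\rho\big)^{\!\ast}$, the upper semicontinuous regularization. Heins' envelope theorem — the regularized supremum of a locally bounded family of SK-metrics is SK — gives $\Lambda\in\mathcal S$, and the standard localization argument (on small discs solve the Dirichlet problem for $\Delta\log\rho=\rho^2$ with the boundary values of $\Lambda$ and reinsert the solution) shows that $\Lambda=e^{2u}|\mathrm dz|^2$ is smooth on $\Sigma^\ast$, has curvature $\equiv-1$, and is complete near each $p_i$. (Alternatively one could minimise the strictly convex Liouville functional after subtracting off the model singularities, the inequality ensuring coercivity through a Troyanov-type Moser--Trudinger inequality.)

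It then remains to verify that $\Lambda$ carries at $p_i$ precisely the singularity recorded in $\textup{D}$, i.e.\ that $u-(\theta_i-1)\ln|z_i|$, resp.\ $u+\ln|z_i|+\ln(-\ln|z_i|)$, extends continuously across $p_i$. The upper bound is built into $\mathcal S$. For the matching lower bound one constructs explicit SK-subsolutions on punctured neighbourhoods asymptotic to the models from below — for a cone point, the rescaled metrics $\tfrac{4\beta^2|z_i|^{2\beta-2}}{(1-|z_i|^{2\beta})^2}|\mathrm dz_i|^2$ with $\beta\uparrow\theta_i$, glued into $\Lambda$ across an annulus — so that maximality of $\Lambda$ forces $u$ to dominate the model up to a bounded error; elliptic regularity for the singular Liouville equation $\Delta u=e^{2u}$ (whose right-hand side is of order $|z_i|^{2\theta_i-2}$, resp.\ $|z_i|^{-2}(\ln|z_i|)^{-2}$) then upgrades this two-sided estimate to the asserted continuous extension. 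Pinning down the exact conical order, and in particular the $\ln(-\ln|z_i|)$ term at a cusp, is the delicate point; everything preceding it is soft.

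Finally, for uniqueness suppose $e^{2u_1}|\mathrm dz|^2$ and $e^{2u_2}|\mathrm dz|^2$ both represent $\textup{D}$ and are hyperbolic. Then $w:=u_1-u_2$ satisfies $\Delta w=e^{2u_1}-e^{2u_2}=e^{2u_2}\big(e^{2w}-1\big)$ on $\Sigma^\ast$, and the common singular normal forms make $w$ bounded near each $p_i$; since $w$ is subharmonic wherever it is positive and superharmonic wherever it is negative, the boundedness lets it extend across the punctures, so $w$ is continuous on the compact surface $\Sigma$. If $\max_\Sigma w>0$ it is attained at some $q\in\Sigma^\ast$ — a function subharmonic and bounded on a punctured disc cannot have a strict maximum at the puncture — and there $\Delta w(q)\le0$ forces $e^{2w(q)}\le1$, a contradiction; symmetrically $\min_\Sigma w\ge0$. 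Hence $w\equiv0$, that is $u_1=u_2$.
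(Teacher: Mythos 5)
First, a point of reference: the paper does not prove Theorem \ref{thm:rat} at all --- it is quoted as Heins' theorem \cite{HE62} (with McOwen \cite{MO88} and Troyanov \cite{Tr91} credited for the purely conical case), and only the necessity direction is remarked on via Gauss--Bonnet. So your proposal can only be measured against Heins' S-K--metric strategy, which is indeed the route you sketch. Your necessity argument (finite area, limiting geodesic curvature $2\pi\theta_i$ of small circles, Gauss--Bonnet on $\Sigma$ minus small discs) is correct up to orientation bookkeeping, and your uniqueness argument (continuity of $w=u_1-u_2$ across the punctures thanks to the common normal forms, then the maximum principle applied to $\Delta w=e^{2u_2}(e^{2w}-1)$) is sound and standard.

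The sufficiency part, however, has genuine gaps. (i) The closure property you invoke is backwards: SK metrics (subsolutions $\Delta u\ge e^{2u}$ of the Liouville equation) are closed under pointwise \emph{maximum}, exactly as subharmonic functions are; the \emph{minimum} of two such metrics acquires a concave kink along the crossing set and is in general not SK, so the parenthetical claim ``an infimum of finitely many $C^2$ metrics of curvature $\le-1$ is again SK'' is false and your patching recipe does not produce an element of $\mathcal S$. (ii) Even repaired with maxima, the nonemptiness of $\mathcal S$ is the step where the hypothesis $\chi(\Sigma)+\sum_i(\theta_i-1)<0$ must enter quantitatively, and no soft local patching can work: on the sphere or torus there is no smooth background metric of curvature $\le-1$ on the complement of the cone/cusp neighborhoods to patch with (rescaling cannot fix the sign of curvature), and for $\theta_i>1$ the model $\rho_i$ tends to $0$ at $p_i$, so the max-gluing does not localize either; moreover the family must be shown to be (uniformly) bounded above, which already uses hyperbolicity of $\Sigma^\ast$. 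You assert rather than prove that the numerical condition ``is exactly what guarantees $\mathcal S\neq\varnothing$.'' (iii) Finally, the statement that the maximal metric $\Lambda$ actually \emph{represents} $D$ --- the exact conical order and the $\ln(-\ln|z_i|)$ asymptotics at a cusp --- is the substance of the theorem's existence claim, and you explicitly leave it as ``the delicate point''; the comparison-with-rescaled-cones argument is only gestured at and the passage from a two-sided bound to the continuous extension of $u-(\theta_i-1)\ln|z_i|$ (resp. $u+\ln|z_i|+\ln(-\ln|z_i|)$) is not carried out. As it stands, the proposal is an outline of Heins' method with a correct necessity and uniqueness part, but the existence half is incomplete and contains one incorrect key assertion.
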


After Heins' work \cite{HE62}, both McOwen \cite{MO88} and Troyanov \cite{Tr91}, who were unaware of \cite{HE62} apparently,  proved the theorem for hyperbolic metrics with only conical singularities by using different PDE methods. In this manuscript, we will give the explicit expressions of conformal hyperbolic metrics near their isolated singularities.

\begin{thm}[Main Theorem]
\label{thm:Belyi}
Let\ $\mathrm{d} \sigma^2$\ be a conformal hyperbolic metric on the punctured disk\ $\bigtriangleup^{\ast}=\{{w\in\mathbb{C}|0<\vert w \vert<1}\}$. If\ $\mathrm{d} \sigma^2$\ has a conical singularity at\ $w=0$\ with the angle\ $2\pi\alpha>0$, then there exists a complex coordinate\ $z$ on\ $\Delta_{\varepsilon}=\{{w\in\mathbb{C}|\vert w \vert<\varepsilon}\}$\ for some\ $\varepsilon>0$ with\ $z(0)=0$ such that
$$ \mathrm{d} \sigma^2|_{\Delta_{\varepsilon}}=\frac{4\alpha^2\vert z \vert^{2\alpha-2}}{\big(1-\vert z \vert ^{2\alpha}\big)^2}\vert \mathrm{d} z \vert^2.$$
Moreover,\ $z$ is unique up to replacement by\ $\lambda z$ where\ $\vert \lambda \vert=1$. If\ $\mathrm{d} \sigma^2$\ has a cusp singularity at\ $w=0$, then there exists a complex coordinate\ $z$ on\ $\Delta_{\varepsilon}=\{{w\in\mathbb{C}|\vert w \vert<\varepsilon}\}$\ for some\ $\varepsilon>0$ with\ $z(0)=0$ such that
$$ \mathrm{d} \sigma^2|_{\Delta_{\varepsilon}}=\vert z \vert ^{-2}\big(\ln|z|\big)^{-2}|dz|^{2}.$$
Moreover,\ $z$ is unique up to replacement by\ $\lambda z$ where\ $\vert \lambda \vert=1$.

\end{thm}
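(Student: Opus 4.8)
The plan is to use the developing map. Recall that a conformal metric of constant curvature $-1$ near a point is the pull-back, under a locally injective holomorphic developing map $f$ into the unit disc $\mathbb{D}$, of the hyperbolic metric $4|d\zeta|^{2}(1-|\zeta|^{2})^{-2}$; on $\Delta^{\ast}$ this $f$ lives on the universal cover, which I realise as $\{\tau:\mathrm{Im}\,\tau>0\}$ with $w=e^{2\pi i\tau}$ and deck transformation $\tau\mapsto\tau+1$, and it satisfies $f(\tau+1)=\gamma\circ f(\tau)$ for a monodromy $\gamma\in\mathrm{Aut}(\mathbb{D})$, well defined up to conjugation; moreover two developing maps of the same metric differ by post-composition with a fixed element of $\mathrm{Aut}(\mathbb{D})$, which leaves the metric unchanged. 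The first step is a short loop-length argument: the $d\sigma^{2}$-length of $\{|w|=r\}$ tends to $0$ as $r\to0$ (immediate from the growth condition $e^{2u}\asymp|w|^{2\alpha-2}$, resp. $e^{2u}\asymp|w|^{-2}(\ln|w|)^{-2}$, defining a conical, resp. cusp, singularity), while its developing image joins some $p_{r}$ to $\gamma(p_{r})$, so $d_{\mathbb{D}}(p_{r},\gamma(p_{r}))\to0$; this excludes hyperbolic $\gamma$ and, in the conical case, forces $\gamma$ elliptic or trivial and $p_{r}$ to converge to a point $q\in\mathbb{D}$ (the fixed point of $\gamma$ when $\gamma\ne\mathrm{id}$), while in the cusp case it forces $\gamma$ parabolic. (One could instead quote Nitsche's dichotomy here.)

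\emph{Conical case.} After post-composing $f$ with an automorphism of $\mathbb{D}$ sending $q$ to $0$, I may assume $\gamma(\zeta)=e^{2\pi i\alpha}\zeta$ and $f(w)\to0$ as $w\to0$ (when $\alpha\in\mathbb{Z}$ the monodromy is trivial and $f$ descends to a bounded holomorphic function on $\Delta^{\ast}$, which then extends over $0$). Then $G(\tau):=f(\tau)\,e^{-2\pi i\alpha\tau}$ is $1$-periodic, hence descends to a holomorphic function $\widetilde{G}$ on $\Delta^{\ast}$ with $f=\widetilde{G}(w)\,w^{\alpha}$ as multivalued functions. From $|f|<1$ one gets $|\widetilde{G}(w)|<|w|^{-\alpha}$, so $\widetilde{G}$ is meromorphic at $0$; writing $\widetilde{G}=w^{-k}U(w)$ with $U$ holomorphic, $U(0)\ne0$, we have $f=w^{\alpha-k}U(w)\to0$, hence $(1-|f|^{2})^{-2}\to1$, and then the conical growth $e^{2u}\asymp|w|^{2\alpha-2}$ together with $e^{2u}=4|f'|^{2}(1-|f|^{2})^{-2}$ forces $|f'|\asymp|w|^{\alpha-1}$; as $f'=w^{\alpha-k-1}\big((\alpha-k)U+wU'\big)$, matching exponents gives $k=0$. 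Therefore $f=w^{\alpha}U(w)$ with $U(0)\ne0$, and $z:=w\,U(w)^{1/\alpha}$ (any branch) is a holomorphic coordinate with $z(0)=0$, $z'(0)\ne0$ and $z^{\alpha}=f$, whence $d\sigma^{2}=4\,|d(z^{\alpha})|^{2}(1-|z^{\alpha}|^{2})^{-2}=\dfrac{4\alpha^{2}|z|^{2\alpha-2}}{(1-|z|^{2\alpha})^{2}}\,|dz|^{2}$.

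\emph{Cusp case.} Pass to the upper half-plane model $\mathbb{H}_{\zeta}$ of the hyperbolic plane and conjugate the parabolic $\gamma$ so that it fixes $\infty$, i.e. $\gamma(\zeta)=\zeta+c$ with $c\in\mathbb{R}\setminus\{0\}$; then $g(\tau)-c\tau=:\psi(w)$ is holomorphic on $\Delta^{\ast}$, where $g$ is the developing map into $\mathbb{H}_{\zeta}$. A short argument using $\mathrm{Im}\,g>0$, $\mathrm{Im}\,\tau=\tfrac{1}{2\pi}\ln\tfrac{1}{|w|}$ and the single-valuedness of $\psi$ rules out $c<0$ (otherwise $e^{2\pi i\psi}$ would extend across $0$ with a zero there, impossible for the exponential of a single-valued function), so after rescaling $\gamma(\zeta)=\zeta+1$ and $g(\tau)-\tau=:\psi(w)$. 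Now $\mathrm{Im}\,g>0$ gives $\mathrm{Im}\,\psi>\tfrac{1}{2\pi}\ln|w|$, hence $z:=w\,e^{2\pi i\psi(w)}$ satisfies $|z|<1$ on $\Delta^{\ast}$ and extends holomorphically to $\Delta$; since $z/w=e^{2\pi i\psi}$ is single-valued and nowhere vanishing on $\Delta^{\ast}$ while $z$ is holomorphic at $0$, the monodromy forces $z$ to have a simple zero at $0$, so $z$ is a genuine coordinate. By construction $g=\tfrac{\ln w}{2\pi i}+\psi=\tfrac{\ln z}{2\pi i}$, so pulling back the metric $|d\zeta|^{2}(\mathrm{Im}\,\zeta)^{-2}$ of $\mathbb{H}_{\zeta}$ gives $d\sigma^{2}=|z|^{-2}(\ln|z|)^{-2}\,|dz|^{2}$.

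For uniqueness, if $z_{1},z_{2}$ are two coordinates as in the theorem then $z_{1}^{\alpha},z_{2}^{\alpha}$ (resp. $\tfrac{\ln z_{1}}{2\pi i},\tfrac{\ln z_{2}}{2\pi i}$) are developing maps of the same metric, hence differ by a fixed $A\in\mathrm{Aut}(\mathbb{D})$ (resp. $\mathrm{Aut}(\mathbb{H})$); letting $w\to0$ forces $A$ to fix $0$ (resp. $\infty$), and matching monodromy then forces $A(\zeta)=e^{i\beta}\zeta$ (resp. $\zeta\mapsto\zeta+b$ with $b\in\mathbb{R}$), which gives $z_{1}=\lambda z_{2}$ with $|\lambda|=1$. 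I expect the crux to be the two removability steps --- showing that $\widetilde{G}$, respectively $\psi$, extends holomorphically and non-degenerately across the puncture --- where the precise growth condition of the singularity, the constraint $\mathrm{Im}\,g>0$, and single-valuedness on $\Delta^{\ast}$ must all be played off against one another; identifying the monodromy type from the type of singularity is the other point requiring care.
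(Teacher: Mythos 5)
Your route is genuinely different from the paper's and, in outline, it works. The paper never argues on the universal cover: it first establishes a local normal form for the developing map (Lemma~\ref{lem:NotEmp}) by writing $F$ as a ratio of solutions of the Fuchsian second-order ODE attached to its Schwarzian, running the Frobenius method, and killing the logarithmic case by a separate estimate using $|F|<1$; this yields $\mathfrak{L}\circ F=\xi^{\alpha}$, resp.\ $\log\xi$, for some $\mathfrak{L}\in\mathrm{PGL}(2,\mathbb{C})$. The proof of the theorem then compares this local form with the three conjugacy types of the monodromy (elliptic, parabolic, hyperbolic), solves the resulting matrix equations to rule out the impossible types, and normalizes $F$ to $z^{\alpha}$, resp.\ $-\sqrt{-1}\log z$. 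You replace all of this ODE input by elementary function theory on the covering $\{\mathrm{Im}\,\tau>0\}\to\Delta^{\ast}$: periodic quotients, removable singularities, winding numbers, and matching growth exponents against the defining asymptotics of the singularity. That buys a more self-contained argument (no Frobenius analysis, no separate logarithmic/integer-angle discussion), at the price that the monodromy type and the rotation angle must be identified by hand rather than read off from the normal form.

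That identification is where your write-up has genuine gaps. First, in the cusp case the loop-length argument excludes only hyperbolic monodromy: a nontrivial elliptic $\gamma$ also has arbitrarily small displacement near its fixed point, and trivial monodromy has zero displacement, so ``it forces $\gamma$ parabolic'' is asserted, not proved; quoting Nitsche does not help, since the conical/cusp dichotomy is already your hypothesis and the correspondence between singularity type and monodromy type is exactly what is at stake. The gap is closable with tools you already use: if $\gamma$ were elliptic or trivial, your conical-case computation ($f=\widetilde G(w)\,w^{\beta}$ with $\widetilde G$ meromorphic at $0$) gives $e^{2u}\asymp|w|^{2a-2}$ for some $a>0$, or $e^{2u}$ bounded, which is incompatible with the cusp asymptotics $e^{2u}\asymp|w|^{-2}(\ln|w|)^{-2}$ --- but this has to be said. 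Relatedly, in the conical case the convergence of $p_{r}$ to an interior point $q\in\mathbb{D}$ uses the finite diameter near a conical singularity (so that the developing image lies in a compact subset of $\mathbb{D}$ and a subsequential limit is a fixed point of $\gamma$ inside $\mathbb{D}$), which you leave implicit. Second, you assume outright that the elliptic rotation is $e^{2\pi i\alpha}$; a priori it is $e^{2\pi i\beta}$ for an unknown $\beta\in[0,1)$. This is harmless but must be derived: run your quotient-and-exponent-matching argument with $\beta$ unknown, obtaining $f=w^{\beta-k}U$ and $\beta-k=\alpha$, which fixes the rotation angle modulo $2\pi$ and also subsumes the integer-$\alpha$ case. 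With these points written out, the rest of your argument (the winding-number step forcing $c>0$ and the simple zero of $z=w\,e^{2\pi i\psi}$, the pullback computations, and the uniqueness via Lemma~\ref{lem:contr}) is correct and complete.
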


The main theorem paves a way for the further study of global geometry of conformal hyperbolic metrics with singularities on compact Riemann surfaces.

We organize  the left part of the manuscript as follows. In section \ref{sec:rat}, we at first recall some elementary knowledge of hyperbolic geometry. Then we give the properties of developing maps and prove a hyperbolic version of Lemma 3.2 in \cite{CWWX15} for hyperbolic metrics (Lemma \ref{lem:NotEmp}). Section \ref{sec:Belyi} is the proof of the main theorem.

\vspace{0.5cm}

\section{Preliminaries}
\label{sec:rat}

\subsection{Elementary hyperbolic geometry}
\label{subsec:Rie}

We will work with any of the Poincar{\' e} disk model\ $\left(\mathbb{D}=\{z\in \mathbb{C}:\vert z \vert<1\},\ \frac{4\vert dz \vert^{2}}{(1-\vert z \vert^{2})^{2}}\right)$ and the upper half-plane model \ $\left(\mathbb{H}=\{z\in \mathbb{C}:\textup{Im z}>0\},\ \frac{\vert dz \vert^{2}}{(\Im z)^{2}}\right)$ of the two dimensional hyperbolic space form at our convenience.
We denote
$$\textup{PSU(1,1)}=\{z\longmapsto\frac{az+b}{\overline{b}z+\overline{a}}:\ a,\ b\in \mathbb{C},\ \vert a \vert^2-\vert b \vert^2=1\}$$
and
 $$\textup{PSL}(2,\mathbb{R})=\{z\longmapsto\frac{az+b}{cz+d}:\ a,\ b,\ c,\ d\in \mathbb{R},\ ad-bc=1\}$$
the group of all orientation-preserving isometries of\ $\mathbb{D}$ and\ $\mathbb{H}$, respectively, and denote by\ $\textup{I}(\mathbb{D})$ the isometry group of $\mathbb{D}$.

\begin{defi}
\label{defi:res}\rm{\cite[p.136]{R06}}.
  {\rm If\ $\mathfrak{L},\ \mathfrak{K}\in\textup{I}(\mathbb{D})$ and\ $\mathfrak{L}$ is not the identity map of ${\Bbb D}$, then\ $\mathfrak{L}$ has a fixed point in\ $\overline{\mathbb{D}}$. The transformation\ $\mathfrak{L}$ is said to be

\begin{itemize}
         \item {\it elliptic} if\ $\mathfrak{L}$ fixes a point of\ $\mathbb{D}$;

         \item {\it parabolic} if\ $\mathfrak{L}$ fixes no point of\ $\mathbb{D}$ and fixes a unique point of \ $\partial{\mathbb{D}}$;

         \item {\it hyperbolic} if\ $\mathfrak{L}$ fixes no point of\ $\mathbb{D}$ and fixes two points of \ $\partial{\mathbb{D}}$.

 \end{itemize} }
\end{defi}

\noindent Note that\ $\mathfrak{L}$ is elliptic, parabolic, or hyperbolic if and only if\ $\mathfrak{K}\circ\mathfrak{L}\circ\mathfrak{K^{-1}}$ is elliptic, parabolic, or hyperbolic, respectively. Thus, being elliptic, parabolic, or hyperbolic depends only on the conjugacy class of $\mathfrak{L}$ in\ $\textup{I}(\mathbb{D})$.
Elliptic, parabolic, or hyperbolic transformations of\ $\mathbb{H}$\ are defined in the similar manner as in the Poincar{\' e} disk model\ $\mathbb{D}$.

\begin{thm}
\label{thm:per}\rm{\cite[Theorem 2.31, p.67]{A05}}, \rm{\cite[p.172-173]{BE83}}.
  \begin{itemize}
         \item In the Poincar{\' e} disk model\ $\mathbb{D}$,  if\ $\mathfrak{L}\in \textup{PSU(1,1)}$ is elliptic, then there exists\ $\mathfrak{K}\in \textup{PSU(1,1)}$ such that\ $\mathfrak{K}\circ\mathfrak{L}\circ\mathfrak{K^{-1}}(z)=e^{i\theta}z$ for some real number\ $\theta$.

         \item In the upper half-plane model\ $\mathbb{H}$,  if\ $\mathfrak{L}\in \textup{PSL}(2,\mathbb{R})$ is parabolic, then there exists\ $\mathfrak{K}\in \textup{PSL}(2,\mathbb{R})$ such that\ $\mathfrak{K}\circ\mathfrak{L}\circ\mathfrak{K^{-1}}(z)=z+t$  for some real number\ $t$.

         \item In the upper half-plane model\ $\mathbb{H}$,  if\ $\mathfrak{L}\in \textup{PSL}(2,\mathbb{R})$ is hyperbolic, then there exists\ $\mathfrak{K}\in \textup{PSL}(2,\mathbb{R})$ such that\ $\mathfrak{K}\circ\mathfrak{L}\circ\mathfrak{K^{-1}}(z)=\lambda z$ for some positive real number\ $\lambda$.
  \end{itemize}
\end{thm}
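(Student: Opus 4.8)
The plan is to handle the three cases separately, exploiting in each the transitivity of the relevant isometry group on a suitable configuration of fixed points and then computing, by hand, the stabilizer of the normalized configuration. For the elliptic case I would use that $\textup{PSU(1,1)}$ acts transitively on $\mathbb{D}$. Since $\mathfrak{L}$ fixes some point $p\in\mathbb{D}$, I choose $\mathfrak{K}\in\textup{PSU(1,1)}$ with $\mathfrak{K}(p)=0$; for instance $\mathfrak{K}(z)=\frac{z-p}{1-\overline{p}z}$ lies in the group and does the job. Then $\mathfrak{K}\circ\mathfrak{L}\circ\mathfrak{K^{-1}}$ fixes the origin. Writing a general element of $\textup{PSU(1,1)}$ as $z\mapsto\frac{az+b}{\overline{b}z+\overline{a}}$ with $|a|^2-|b|^2=1$ and imposing that $0$ be fixed forces $b=0$, hence $|a|=1$ and the map reduces to $z\mapsto(a/\overline{a})z=e^{i\theta}z$ with $e^{i\theta}=a/\overline{a}$, which is the asserted normal form.

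For the parabolic and hyperbolic cases I would pass to the upper half-plane model and use the transitivity of $\textup{PSL}(2,\mathbb{R})$ on $\partial\mathbb{H}=\mathbb{R}\cup\{\infty\}$. In the parabolic case $\mathfrak{L}$ has a unique boundary fixed point $x_0$; choosing $\mathfrak{K}\in\textup{PSL}(2,\mathbb{R})$ with $\mathfrak{K}(x_0)=\infty$, the conjugate fixes $\infty$ and nothing else on the boundary, so it has lower-left entry $c=0$ and is affine, $z\mapsto\frac{a}{d}z+\frac{b}{d}$. Since a finite fixed point would exist whenever $a/d\neq1$, uniqueness forces $a=d$; together with the normalization $ad=1$ this gives $a=d=1$ in $\textup{PSL}(2,\mathbb{R})$, leaving the translation $z\mapsto z+t$ with $t=b$. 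In the hyperbolic case $\mathfrak{L}$ has two distinct boundary fixed points, and by the double transitivity of $\textup{PSL}(2,\mathbb{R})$ on ordered pairs of distinct points of $\partial\mathbb{H}$ I can pick $\mathfrak{K}$ sending them to $0$ and $\infty$. The conjugate then fixes both $0$ and $\infty$, which forces $b=c=0$, so with $ad=1$ it becomes $z\mapsto(a/d)z=a^2z$; since $a$ is real and nonzero, $\lambda=a^2>0$, giving the required $z\mapsto\lambda z$.

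The only point demanding genuine care is to secure the transitivity statements inside the correct isometry groups, not merely inside the full M\"obius group: one must exhibit conjugating elements $\mathfrak{K}$ whose coefficients obey the defining constraints of $\textup{PSU(1,1)}$ or $\textup{PSL}(2,\mathbb{R})$. For $\textup{PSU(1,1)}$ the explicit map above settles transitivity on $\mathbb{D}$; for $\textup{PSL}(2,\mathbb{R})$ one checks directly that real M\"obius transformations act transitively on $\mathbb{R}\cup\{\infty\}$ and doubly transitively on pairs of distinct boundary points. Granting these, each classification of stabilizers reduces to the short coefficient computations indicated above, and the hypothesis that $\mathfrak{L}$ is not the identity guarantees the parameters are nontrivial.
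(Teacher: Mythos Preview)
Your argument is correct and is essentially the standard textbook proof. Note, however, that the paper does not supply its own proof of this theorem: it is quoted verbatim from Anderson \cite[Theorem~2.31]{A05} and Beardon \cite[pp.~172--173]{BE83} and used as background. So there is nothing in the paper to compare your proof against; your write-up simply fills in what the cited references contain.

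One minor cosmetic point: the map $\mathfrak{K}(z)=\dfrac{z-p}{1-\overline{p}z}$ has $|a|^2-|b|^2=1-|p|^2$, not $1$, so strictly speaking you should divide numerator and denominator by $\sqrt{1-|p|^2}$ before asserting $\mathfrak{K}\in\textup{PSU(1,1)}$. As an element of the projective group this changes nothing, so the argument is unaffected.
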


\subsection{Developing map}
\label{subsec:three}

In the paper \cite{CWWX15}, Q. Chen, W. Wang, Y. Wu and the last author discussed developing maps of conical metrics with constant curvature one on a compact Riemann surface and their monodromy in detail. Actually these conclusions are also true in non-compact cases. We will give the corresponding case for conical hyperbolic metrics. In particular, we shall state the following two lemmas and omit their proof, which is similar as Lemmas 2.1-2 in \cite{CWWX15}. We need to prepare some notions at first.

A multi-valued locally univalent meromorphic function\ $H$\ on a Riemann surface\ $\Sigma$ is said to be {\it projective} if any two function elements\ $\mathfrak{H_{1}}, \mathfrak{H_{2}}$\ of\ $H$ near a point\ $p\in \Sigma$  are related by a fractional linear transformation\ $\mathfrak{L}\in \textup{PGL}(2,\mathbb{C})$, i.e.,\ $\mathfrak{H_{1}}=\mathfrak{L}\circ \mathfrak{H_{2}}$.

Let\ $\mathrm{d} \sigma^2$\ be a conformal hyperbolic metric on a Riemann surface\ $\Sigma$, not necessarily compact, representing the divisor\ $\textup{D}=\sum\limits_{i=1}^n(\theta_{i}-1)p_{i},\  0\leq \theta_{i}\not=1$. We call a projective function\ $F:\Sigma\backslash {\rm supp}\, D\longrightarrow \mathbb{D}$\ a {\it developing map} of the metric\ $\mathrm{d} \sigma^2$\ if\ $\mathrm{d} \sigma^2=F^{*}\mathrm{d} \sigma_{0}^2$, where\ $\mathrm{d} \sigma_{0}^2=\frac{4|\mathrm{d} z|^2 }{(1-|z|^2)^2}$\ is the hyperbolic metric on the unit disc\ $\mathbb{D}$.\\

\begin{lem}
\label{lem:pq}
 Let\ $\mathrm{d} \sigma^2$\ be a conformal hyperbolic metric on a compact Riemann surface\ $\Sigma$, representing the divisor $\textup{D}=\sum\limits_{i=1}^n(\theta_{i}-1)p_{i},\  0\leq \theta_{i}\not=1$. Then there exists a projective function\ $F$\ from\ $\Sigma\backslash {\rm supp}\, D$\ to the unit disc\ $\mathbb{D}$ such that the monodromy of\ $F$\ belongs to \textup{PSU(1,1)} and
\[
\mathrm{d} \sigma^2=F^{*}\mathrm{d} \sigma_{0}^2,
\]
where\ $\mathrm{d} \sigma_{0}^2=\frac{4|\mathrm{d} z|^2 }{(1-|z|^2)^2}$\ is the hyperbolic metric on\ $\mathbb{D}$.

\end{lem}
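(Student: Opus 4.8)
The plan is to present $\mathrm{d}\sigma^2$ as a $\big(\textup{PSU(1,1)},\mathbb{D}\big)$-structure on $\Sigma^{\ast}:=\Sigma\backslash\mathrm{supp}\,D$ and to take $F$ to be its developing map. Concretely, I would choose a cover of $\Sigma^{\ast}$ by simply connected coordinate charts, produce on each chart a local developing map into $\mathbb{D}$, check that two overlapping ones differ by an element of $\textup{PSU(1,1)}$, and then assemble $F$ by analytic continuation. The compactness of $\Sigma$ is not essential here, in keeping with the remark preceding the lemma.

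For the local step I would invoke the classical local rigidity of metrics of constant curvature: since $\mathrm{d}\sigma^2$ has curvature $-1$, every point of $\Sigma^{\ast}$ has a simply connected neighbourhood $U$ carrying an orientation-preserving isometry $F_U\colon(U,\mathrm{d}\sigma^2)\to(\mathbb{D},\mathrm{d}\sigma_0^2)$ onto an open subset of $\mathbb{D}$; being conformal and orientation-preserving, $F_U$ is a locally univalent holomorphic map with $F_U^{\ast}\mathrm{d}\sigma_0^2=\mathrm{d}\sigma^2$. If $F_U,F_U'$ are two such maps on $U$, then $F_U'\circ F_U^{-1}$ is a local isometry of $(\mathbb{D},\mathrm{d}\sigma_0^2)$, hence the restriction of an element $L\in\textup{PSU(1,1)}=\textup{Aut}(\mathbb{D})$ (its Schwarzian vanishes, so it is a M\"obius transformation, and the M\"obius transformations preserving $\mathrm{d}\sigma_0^2$ form exactly $\textup{PSU(1,1)}$), so $F_U'=L\circ F_U$. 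The existence of $F_U$ can be made explicit: writing $\mathrm{d}\sigma^2=e^{2u}|\mathrm{d}z|^2$, the curvature equation $u_{z\bar z}=\tfrac14 e^{2u}$ yields $\partial_{\bar z}\big(u_{zz}-u_z^2\big)=0$, so $\phi:=2\big(u_{zz}-u_z^2\big)$ is holomorphic on $U$; taking two linearly independent solutions $y_1,y_2$ of $2y''+\phi y=0$, the ratio $f:=y_1/y_2$ is a locally univalent meromorphic function with Schwarzian derivative $\phi$, and among its $\textup{PGL}(2,\mathbb{C})$-images there is exactly one $\textup{PSU(1,1)}$-orbit taking values in $\mathbb{D}$ with pulled-back metric equal to $\mathrm{d}\sigma^2$ --- this last normalisation is precisely the point at which the reality and positivity of $\mathrm{d}\sigma^2$ are used.

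To globalise, choose a cover $\{U_\alpha\}$ of $\Sigma^{\ast}$ by simply connected charts with local developing maps $F_\alpha$ as above. On each connected component of a nonempty $U_\alpha\cap U_\beta$ the uniqueness just established gives $F_\alpha=L_{\alpha\beta}\circ F_\beta$ with $L_{\alpha\beta}\in\textup{PSU(1,1)}$, and these satisfy the cocycle identity on triple overlaps. Fixing a base point together with a germ of some $F_{\alpha_0}$ there, analytic continuation along paths in $\Sigma^{\ast}$ is always possible --- each continuation is again a local developing map --- and yields a multi-valued locally univalent meromorphic function $F$ on $\Sigma^{\ast}$ whose function elements differ by elements of $\textup{PSU(1,1)}$; that is, $F$ is a projective function with monodromy in $\textup{PSU(1,1)}$. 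Since $F_\alpha^{\ast}\mathrm{d}\sigma_0^2=\mathrm{d}\sigma^2$ on each $U_\alpha$ and $\mathrm{d}\sigma_0^2$ is $\textup{PSU(1,1)}$-invariant, the pulled-back metric $F^{\ast}\mathrm{d}\sigma_0^2$ is single-valued on $\Sigma^{\ast}$ and equals $\mathrm{d}\sigma^2$, so $F$ is the required developing map. The one genuinely substantive point is the local normalisation inside $\mathbb{D}$ with transition maps in $\textup{PSU(1,1)}$ rather than merely in $\textup{PGL}(2,\mathbb{C})$; once it is granted, the rest is the formal cocycle-and-continuation argument, entirely parallel to the proof of Lemmas 2.1--2.2 of \cite{CWWX15}, with $\mathbb{S}^2$ and $\textup{PSU(2)}$ there replaced by $\mathbb{D}$ and $\textup{PSU(1,1)}$.
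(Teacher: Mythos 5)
Your argument is correct and is essentially the proof the paper has in mind: the paper omits it, stating only that it is similar to Lemmas 2.1--2.2 of \cite{CWWX15}, and that argument is exactly your scheme of local isometric charts into the model (obtained from the Liouville equation, equivalently the holomorphic quadratic differential $2(u_{zz}-u_z^2)\,\mathrm{d}z^2$ and the associated second-order ODE), transition maps in $\textup{PSU}(1,1)$, and globalization by analytic continuation to a projective function with monodromy in $\textup{PSU}(1,1)$. No gaps; your observation that compactness is irrelevant also matches the remark preceding the lemma.
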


\begin{lem}
\label{lem:contr}
  Any two developing maps\ $F_{1}$,\ $F_{2}$ of the metric\ $\mathrm{d} \sigma^2$\ are related by a fractional linear transformation\ $\mathfrak{L}\in \textup{PSU}(1,1)$, i.e.,\ $F_{2}=\mathfrak{L}\circ F_{1}$. In particular, any two developing maps of\ $\mathrm{d} \sigma^2$\ have mutually conjugate monodromy in\ $\textup{{PSU}(1,1)}$. Then we call this conjugate class {\rm the monodromy of the metric\ $\mathrm{d} \sigma^2$}.

\end{lem}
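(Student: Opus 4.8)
The plan is to reduce the statement to the fact that any holomorphic local isometry of the Poincar\'e disk is the restriction of a global one, i.e. of an element of \textup{PSU(1,1)}. First I would fix a base point $p_{0}\in\Sigma\backslash{\rm supp}\,D$ together with a simply connected neighbourhood $U$ of $p_{0}$, and choose single-valued branches (function elements) $\mathfrak{F}_{1},\mathfrak{F}_{2}$ of $F_{1},F_{2}$ on $U$. Since each $F_{i}$ takes values in $\mathbb{D}$ and is locally univalent meromorphic, each $\mathfrak{F}_{i}$ is a holomorphic local biholomorphism from $U$ onto an open subset of $\mathbb{D}$; the identity $\mathrm{d}\sigma^{2}=\mathfrak{F}_{i}^{*}\mathrm{d}\sigma_{0}^{2}$ says that $\mathfrak{F}_{i}$ is an orientation-preserving local isometry from $(U,\mathrm{d}\sigma^{2})$ to $(\mathbb{D},\mathrm{d}\sigma_{0}^{2})$.

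Next I would form the transition map $\mathfrak{G}=\mathfrak{F}_{2}\circ\mathfrak{F}_{1}^{-1}$, which is a holomorphic map from an open subset of $\mathbb{D}$ into $\mathbb{D}$. A direct computation gives
\[
\mathfrak{G}^{*}\mathrm{d}\sigma_{0}^{2}=(\mathfrak{F}_{1}^{-1})^{*}\mathfrak{F}_{2}^{*}\mathrm{d}\sigma_{0}^{2}=(\mathfrak{F}_{1}^{-1})^{*}\mathrm{d}\sigma^{2}=(\mathfrak{F}_{1}^{-1})^{*}\mathfrak{F}_{1}^{*}\mathrm{d}\sigma_{0}^{2}=\mathrm{d}\sigma_{0}^{2},
\]
so $\mathfrak{G}$ is a holomorphic local isometry of the hyperbolic disk. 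The core step is then to upgrade $\mathfrak{G}$ to a global isometry: choosing $z_{0}$ in the domain of $\mathfrak{G}$, let $\mathfrak{L}\in\textup{PSU(1,1)}$ be the unique element matching the $1$-jet of $\mathfrak{G}$ at $z_{0}$ (the isometry condition forces $|\mathfrak{G}'(z_{0})|=(1-|\mathfrak{G}(z_{0})|^{2})/(1-|z_{0}|^{2})$, and the three real parameters of $\textup{PSU(1,1)}$ suffice to prescribe the image point and the argument of the derivative). Two holomorphic isometries with the same $1$-jet at a point coincide near that point, hence on the whole connected domain by the identity theorem; therefore $\mathfrak{G}=\mathfrak{L}|_{\mathrm{dom}\,\mathfrak{G}}$ and $\mathfrak{F}_{2}=\mathfrak{L}\circ\mathfrak{F}_{1}$ on $U$.

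Finally I would propagate this relation globally by analytic continuation. Because $\mathfrak{L}$ is a fixed single-valued M\"obius transformation, continuing the identity $\mathfrak{F}_{2}=\mathfrak{L}\circ\mathfrak{F}_{1}$ along any path in $\Sigma\backslash{\rm supp}\,D$ keeps it valid branch by branch, which is exactly the assertion $F_{2}=\mathfrak{L}\circ F_{1}$. For the monodromy statement, let $\rho_{1},\rho_{2}$ denote the monodromy representations of $F_{1},F_{2}$; continuing $\mathfrak{F}_{2}=\mathfrak{L}\circ\mathfrak{F}_{1}$ around a loop $\gamma$ based at $p_{0}$ gives $\rho_{2}(\gamma)\circ\mathfrak{F}_{2}=\mathfrak{L}\circ\rho_{1}(\gamma)\circ\mathfrak{F}_{1}=\mathfrak{L}\,\rho_{1}(\gamma)\,\mathfrak{L}^{-1}\circ\mathfrak{F}_{2}$, so $\rho_{2}(\gamma)=\mathfrak{L}\,\rho_{1}(\gamma)\,\mathfrak{L}^{-1}$ and the two monodromies are conjugate by $\mathfrak{L}\in\textup{PSU(1,1)}$. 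That each $\rho_{i}(\gamma)$ already lies in $\textup{PSU(1,1)}$ follows from single-valuedness of $\mathrm{d}\sigma^{2}$: one has $\rho_{i}(\gamma)^{*}\mathrm{d}\sigma_{0}^{2}=\mathrm{d}\sigma_{0}^{2}$ on an open set, hence everywhere by analyticity, which forces the M\"obius map $\rho_{i}(\gamma)$ to preserve the disk isometrically. The main obstacle is the global extension step for $\mathfrak{G}$: one must argue that a holomorphic isometry defined only on a proper subdomain of $\mathbb{D}$ is nonetheless the restriction of a genuine element of $\textup{PSU(1,1)}$, which is where the rigidity of hyperbolic isometries---their determination by a single $1$-jet, equivalently the equality case of the Schwarz--Pick lemma---is essential.
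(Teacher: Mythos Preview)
Your argument is correct and is essentially the standard one. The paper itself does not prove this lemma: immediately before stating Lemmas~\ref{lem:pq} and~\ref{lem:contr} it says that their proofs are omitted, being analogous to Lemmas~2.1--2.2 of \cite{CWWX15} (the spherical case). Your proof is exactly the natural adaptation of that argument to curvature $-1$: compare two branches via $\mathfrak{G}=\mathfrak{F}_{2}\circ\mathfrak{F}_{1}^{-1}$, observe that $\mathfrak{G}$ is a holomorphic local isometry of $(\mathbb{D},\mathrm{d}\sigma_{0}^{2})$, and invoke the rigidity of such maps to conclude $\mathfrak{G}\in\textup{PSU}(1,1)$. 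The only substantive difference from the spherical version is the identification of the isometry group, $\textup{PSU}(1,1)$ in place of $\textup{PSU}(2)$; your Schwarz--Pick/$1$-jet justification for the rigidity step is the appropriate replacement and is sound. The analytic-continuation and monodromy-conjugacy paragraphs are likewise correct and match what the reference does.
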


The following lemma expresses the Schwarzian derivative of a developing map.\\

\begin{lem}{\rm{\cite[Theorem 1.2]{LLX17}}}.
  \label{lem:Transitive}
  Let\ $\mathrm{d} \sigma^2$\ be a conformal hyperbolic metric on a Riemann surface\ $\Sigma$, and suppose\ $\mathrm{d} \sigma^2$\ represents a divisor\ $\textup{D}=\sum\limits_{i=1}^n(\theta_{i}-1)p_{i},\  0\leq \theta_{i}\not=1$.
   Suppose that\ $F:\Sigma\backslash {\rm supp}\, D\longrightarrow \mathbb{D}$\ is a developing map of\ $\mathrm{d} \sigma^2$. Then the Schwarzian derivative\ $\{F,z\}=\frac{f^{'''}(z)}{f^{'}(z)}-\frac{3}{2}\Big(\frac{f^{''}(z)}{f^{'}(z)}\Big)^{2}$ of\ $F$\ equals
\[
\{F,z\}=\frac{1-\theta_{i}^{2}}{2z^{2}}+\frac{d_{i}}{z}+\phi_{i}(z)
\]
in a neighborhood\ $U_{i}$\ of\ $p_{i}$\ with complex coordinate\ $z$ and\ $z(p_{i})=0$, where the\ $d_{i}$\ are constants and the\ $\phi_{i}$\ are holomorphic functions in\ $U_{i}$, depending on the complex coordinate\ $z$.
\end{lem}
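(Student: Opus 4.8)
The plan is to exploit the fact that, although the developing map $F$ is multivalued, its Schwarzian derivative $\{F,z\}$ is a genuine single-valued holomorphic function on the punctured neighborhood $U_i\setminus\{p_i\}$: it is holomorphic because $F$ is locally univalent meromorphic (so $F'$ never vanishes), and it is single-valued because the monodromy of $F$ lies in $\textup{PSU}(1,1)\subset\textup{PGL}(2,\mathbb{C})$ while the Schwarzian is invariant under post-composition with fractional linear transformations. This same invariance shows, via Lemma \ref{lem:contr}, that $\{F,z\}$ is unchanged if $F$ is replaced by any other developing map $G=\mathfrak{K}\circ F$ with $\mathfrak{K}\in\textup{PSU}(1,1)$. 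Hence it suffices to choose, near $p_i$, one convenient normalization of the developing map and to determine the principal part of the resulting holomorphic function at its isolated singularity $z=0$; the assertion is precisely that this principal part is $\frac{1-\theta_i^2}{2z^2}+\frac{d_i}{z}$, with the remainder $\phi_i$ holomorphic.

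First I would identify a local normal form for the developing map. As recalled in the introduction, an isolated singularity of a hyperbolic metric is either conical or a cusp; correspondingly the monodromy of $F$ around $p_i$ is either elliptic or parabolic (a hyperbolic monodromy is excluded, since it corresponds to a complete end of infinite area rather than an isolated singularity). Using Theorem \ref{thm:per} I would conjugate $F$ to a developing map $G$ whose monodromy around $p_i$ is the standard rotation $\zeta\mapsto e^{i\vartheta}\zeta$ of $\mathbb{D}$ in the conical case and the standard translation $\zeta\mapsto\zeta+t$ of $\mathbb{H}$ in the cusp case. In the conical case the fixed point $0$ is the image of $p_i$, so $G\to 0$ and $z^{-\vartheta/(2\pi)}G$ is single-valued; boundedness of $G$ (Riemann's removable singularity theorem) together with the requirement that $G^{*}\mathrm{d}\sigma_0^2$ have conformal factor $\sim|z|^{2\theta_i-2}$ forces
$$G(z)=z^{\theta_i}h(z),\qquad h\ \text{holomorphic near }0,\ h(0)\neq 0.$$
In the cusp case the same reasoning, now using that $\Im G>0$ rules out any pole in the single-valued part, gives
$$G(z)=\frac{t}{2\pi i}\log z+g(z),\qquad g\ \text{holomorphic near }0.$$

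With these normal forms the computation is direct. In the conical case, writing $q=G'/G=\theta_i/z+h'/h$ (where $h'/h$ is holomorphic since $h(0)\neq 0$) and using $\{G,z\}=q''/q-\tfrac{3}{2}(q'/q)^2-\tfrac{1}{2}q^2$, the coefficient of $z^{-2}$ is $2-\tfrac{3}{2}-\tfrac{\theta_i^2}{2}=\tfrac{1-\theta_i^2}{2}$, while the remaining terms contribute only a simple pole and a holomorphic part. In the cusp case $\{\,c\log z+g,\,z\,\}$ has leading term $\tfrac{1}{2z^2}$, which is $\tfrac{1-\theta_i^2}{2z^2}$ with $\theta_i=0$, again followed by a simple pole and a holomorphic part. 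Reading off the Laurent expansion of the single-valued holomorphic function $\{F,z\}=\{G,z\}$ at $z=0$ then yields $\{F,z\}=\frac{1-\theta_i^2}{2z^2}+\frac{d_i}{z}+\phi_i(z)$ with $d_i$ constant and $\phi_i$ holomorphic, as claimed.

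The main obstacle is the second step: rigorously pinning down the normal form of $G$, i.e.\ showing that a conical (resp.\ cusp) singularity forces elliptic (resp.\ parabolic) monodromy and that the single-valued factor $h$ (resp.\ summand $g$) extends holomorphically with the stated vanishing order. Once this structural input is in place, the Schwarzian coefficients fall out algebraically. As an independent check on the leading coefficient I would also use the conformal-factor identity: writing $\mathrm{d}\sigma^2=e^{2u}|\mathrm{d} z|^2$ and differentiating $e^{2u}=4|F'|^2/(1-|F|^2)^2$ twice in $z$, all terms involving $\bar z$ and $|F|^2$ cancel and one obtains $\{F,z\}=2\big(u_{zz}-u_z^2\big)$; inserting the conical expansion $u=(\theta_i-1)\ln|z|+O(1)$ reproduces the coefficient $\tfrac{1-\theta_i^2}{2}$ through the identity $\tfrac{\theta_i-1}{2}+\tfrac{(\theta_i-1)^2}{4}=\tfrac{\theta_i^2-1}{4}$, and the analogous computation with the cusp expansion $u=-\ln|z|-\ln(-\ln|z|)+O(1)$ gives $\tfrac{1}{2}$.
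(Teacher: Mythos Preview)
The paper does not supply its own proof of this lemma: it is quoted verbatim from \cite[Theorem~1.2]{LLX17} and used as a black box. So there is no in-paper argument to compare yours against; what follows is a brief assessment of your sketch on its own terms and in relation to the paper's logical structure.

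Your computational core is correct. The identity $\{G,z\}=q''/q-\tfrac{3}{2}(q'/q)^{2}-\tfrac{1}{2}q^{2}$ with $q=G'/G$ is valid (it follows from $p=G''/G'=(q'+q^{2})/q$ and $\{G,z\}=p'-\tfrac{1}{2}p^{2}$), and plugging $q=\theta_i/z+h'/h$ does give leading coefficient $2-\tfrac{3}{2}-\tfrac{\theta_i^{2}}{2}=\tfrac{1-\theta_i^{2}}{2}$; the cusp case likewise yields $\tfrac{1}{2z^{2}}$. The alternative identity $\{F,z\}=2(u_{zz}-u_z^{2})$ is also correct and your arithmetic for its leading term is right.

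There is, however, a logical hazard you should be aware of. Your ``structural input'' assumes that the local monodromy at a conical point is elliptic and at a cusp is parabolic. In this paper that fact is \emph{not} available prior to Lemma~\ref{lem:Transitive}: it is established only in the proof of the Main Theorem, which relies on Lemma~\ref{lem:NotEmp}, which in turn invokes Lemma~\ref{lem:Transitive}. So within the paper's dependency graph your argument is circular as stated. To repair this you would have to rule out hyperbolic monodromy directly from the asymptotic $e^{2u}\sim|z|^{2\theta_i-2}$, or, more simply, drop the trichotomy altogether and argue growth: from $|G|<1$ one gets $|G'|\lesssim e^{u}\sim|z|^{\theta_i-1}$, and then one must show that the single-valued factor of $G$ extends with the exact vanishing order dictated by $\theta_i$. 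You correctly flag this normal-form step as the main obstacle; I am only emphasizing that the monodromy shortcut you propose is not available here.

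A second, smaller gap is in your ``independent check'': the paper's definition of a conical (resp.\ cusp) singularity asks only that $u-(\theta_i-1)\ln|z|$ (resp.\ $u+\ln|z|+\ln(-\ln|z|)$) extend \emph{continuously}. That is not enough, by itself, to control $z^{2}v_{zz}$, $zv_z$, $z^{2}v_z^{2}$ individually as $z\to 0$; one needs an extra estimate (or a bootstrap from the holomorphicity of $\{F,z\}$) before the computation with $u_{zz}-u_z^{2}$ becomes rigorous.
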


 $F$ is said to be {\it compatible with the divisor}\
 $\textup{D}=\sum\limits_{i=1}^n(\theta_{i}-1)p_{i},\  0\leq \theta_{i}\not=1$
 if the Schwarzian\ $\{F,z\}$ of\ $F$\ has the form above. We now go into local expressions of developing maps near conical or cusp singularities of conformal hyperbolic metrics in the following lemma, where for the sake of clarity we change the notion of $D$ a little.

\begin{lem}
\label{lem:NotEmp}
  Let\ $F:\Sigma\backslash {\rm supp}\, D\longrightarrow \mathbb{D}$ be a projective function compatible with the divisor
  $\textup{D}=\sum\limits_{i=1}^n(\alpha_{i}-1)p_{i}-\sum\limits_{j=1}^mq_{j}$ with $0<\alpha_j\not=1$ such that its monodromy lies in  $\textup{{PSU}(1,1)}$. Then there exists a neighborhood\ $U_{i}$ of\ $p_{i}$\ with complex coordinate\ $z$\ and\ $\mathfrak{L}_{i}\in \textup{PGL}(2,\mathbb{C})$ such that\ $z(p_{i})=0$\ and\ $G_{i}=\mathfrak{L}_{i}\circ F$\ has the form\ $G_{i}(z)=z^{\alpha_{i}}$; and there exists a neighborhood\ $U_{j}$ of\ $q_{j}$\ with complex coordinate\ $z$\ and\ $\mathfrak{L}_{j}\in \textup{PGL}(2,\mathbb{C})$, such that\ $z(q_{j})=0$\ and\ $G_{j}=\mathfrak{L}_{j}\circ F$\ has the form\ $G_{j}(z)=\log z$. Moreover the pullback metric $F^{*}\mathrm{d} \sigma_{0}^2$ on $\Sigma\backslash {\rm supp}\, D$ is a conformal hyperbolic metric representing $D$.

\end{lem}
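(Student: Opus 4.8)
The strategy is to work locally near a single singular point and use the explicit form of the Schwarzian derivative supplied by the compatibility hypothesis. Consider first a conical point $p_i$ with cone angle $2\pi\alpha_i$, and choose a complex coordinate $z$ with $z(p_i)=0$. By the definition of compatibility we know $\{F,z\} = \frac{1-\alpha_i^2}{2z^2} + \frac{d_i}{z} + \phi_i(z)$ with $\phi_i$ holomorphic. The plan is to compare $F$ with the model multivalued function $z^{\alpha_i}$, whose Schwarzian is exactly $\frac{1-\alpha_i^2}{2z^2}$. Since the Schwarzian equation is a second-order linear ODE in disguise (writing $F = y_1/y_2$ where $y_1,y_2$ solve $y'' + \tfrac12\{F,z\}\,y = 0$), the local solution theory for ODEs with a regular singular point (Fuchsian theory) gives a basis of solutions whose exponents are the roots of the indicial equation $\rho(\rho-1) + \tfrac{1-\alpha_i^2}{4}\cdot(\text{coefficient normalization}) = 0$, i.e. $\rho = \frac{1\pm\alpha_i}{2}$. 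When $\alpha_i\notin\mathbb Z$ the two exponents do not differ by an integer, so there is no logarithmic term, and one gets solutions $y_1 = z^{(1+\alpha_i)/2} h_1(z)$, $y_2 = z^{(1-\alpha_i)/2} h_2(z)$ with $h_1,h_2$ holomorphic and nonvanishing at $0$. Hence $F = y_1/y_2 = z^{\alpha_i}\,h(z)$ with $h$ holomorphic and $h(0)\neq 0$; absorbing $h$ by a biholomorphic change of the coordinate $z$ (possible since $z\mapsto z\,h(z)^{1/\alpha_i}$ is a local biholomorphism near $0$), we may arrange $F(z) = z^{\alpha_i}$ after postcomposing with a suitable $\mathfrak L_i\in\textup{PGL}(2,\mathbb C)$ to normalize which branch/representative of the projective function we pick. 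When $\alpha_i\in\mathbb Z_{\ge 2}$ the apparent logarithmic resonance must be shown to be absent; this is where the extra information beyond the leading Schwarzian term — namely that the monodromy lies in $\textup{PSU}(1,1)$, hence is conjugate to $z\mapsto e^{2\pi i\alpha_i}z = z$ (an elliptic element, here trivial) rather than a parabolic one — rules out the logarithm. I expect this resonant case to be the main technical obstacle.

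For a cusp point $q_j$ the divisor coefficient is $\alpha_j = 0$, so the Schwarzian reads $\{F,z\} = \frac{1}{2z^2} + \frac{d_j}{z} + \phi_j(z)$, the indicial exponents coincide ($\rho = \tfrac12$ double), and Fuchsian theory produces a fundamental system $y_1 = z^{1/2}h_1(z)$, $y_2 = z^{1/2}\big(h_1(z)\log z + h_2(z)\big)$ with $h_1(0)\neq 0$. Therefore $F = y_1/y_2$ is, up to a Möbius transformation, of the form $1/(\log z + (\text{holomorphic}))$, equivalently $\mathfrak L_j\circ F = \log z$ after absorbing the holomorphic correction into a biholomorphic change of $z$ exactly as above. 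The monodromy around $q_j$ of $z\mapsto \log z$ is $w\mapsto w + 2\pi i$, a parabolic element; the hypothesis that the monodromy lies in $\textup{PSU}(1,1)$ forces it to be a parabolic isometry of $\mathbb D$, which is consistent and in fact is what guarantees that the relevant constant in the Fuchsian expansion is nonzero (so that the genuinely logarithmic solution, not a purely holomorphic one, occurs). One must check the monodromy compatibility carefully: the $\textup{PGL}(2,\mathbb C)$ element $\mathfrak L_j$ conjugates the true monodromy into the standard parabolic $w\mapsto w+2\pi i$, and since both are parabolic and the true one is already in $\textup{PSU}(1,1)$, $\mathfrak L_j$ can be chosen so that everything stays consistent.

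Finally, for the last sentence — that $F^*\mathrm d\sigma_0^2$ represents the divisor $D$ — one simply pulls back: near $p_i$, using $\mathfrak L_i\circ F(z) = z^{\alpha_i}$ and the $\textup{PSU}(1,1)$-invariance of $\mathrm d\sigma_0^2$, one computes
\[
F^*\mathrm d\sigma_0^2 = (\mathfrak L_i\circ F)^*\mathrm d\sigma_0^2 = \frac{4\,|\mathrm d(z^{\alpha_i})|^2}{\big(1-|z^{\alpha_i}|^2\big)^2} = \frac{4\alpha_i^2\,|z|^{2\alpha_i-2}}{\big(1-|z|^{2\alpha_i}\big)^2}\,|\mathrm d z|^2,
\]
so writing $\mathrm d\sigma^2 = e^{2u}|\mathrm d z|^2$ we see $u - (\alpha_i-1)\ln|z|$ extends continuously (indeed real-analytically) across $0$, matching the definition of a conical singularity of angle $2\pi\alpha_i$. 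Near $q_j$, using $\mathfrak L_j\circ F(z) = \log z$ and conjugating $\mathbb D$ to $\mathbb H$ so that $\mathrm d\sigma_0^2$ becomes $|\mathrm d\zeta|^2/(\Im\zeta)^2$ with $\zeta = \log z$ landing in a horodisk, one gets
\[
F^*\mathrm d\sigma_0^2 = \frac{|\mathrm d(\log z)|^2}{\big(\Im(\log z)\big)^2}\ \text{(in suitable branch)} \sim |z|^{-2}\big(\ln|z|\big)^{-2}|\mathrm d z|^2,
\]
so $u + \ln|z| + \ln(-\ln|z|)$ extends continuously across $0$, which is exactly the defining property of a cusp. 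Since $\mathrm d\sigma_0^2$ has curvature $-1$ and pullback by a locally biholomorphic map preserves curvature, $F^*\mathrm d\sigma_0^2$ is hyperbolic on $\Sigma\setminus\mathrm{supp}\,D$; combined with the two local computations this shows it represents $D$, completing the proof.
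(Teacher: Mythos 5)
Your outline reproduces the routine parts of the paper's argument (Frobenius theory at a regular singular point, exponents $(1\pm\alpha_i)/2$, absorbing the nonvanishing holomorphic factor into the coordinate, and the double-root computation at a cusp), but it has a genuine gap exactly where the paper has to work: the resonant case $\alpha_i=m\in\mathbb{Z}_{\ge 2}$. Your proposed reason for the absence of the logarithmic solution --- that the monodromy lies in $\textup{PSU}(1,1)$ and is ``hence conjugate to $z\mapsto e^{2\pi\sqrt{-1}\alpha_i}z=z$, an elliptic element'' --- is a non sequitur: $\textup{PSU}(1,1)$ contains parabolic elements, and nothing in the hypotheses says the local monodromy at $p_i$ is trivial or elliptic; that is precisely what has to be proved. (Indeed, at the cusps $q_j$ the local monodromy is a genuine parabolic lying in $\textup{PSU}(1,1)$, so group membership alone can never exclude the logarithm.) The paper's actual argument is the following: if the log occurs (i.e.\ $R_m\neq 0$ in the Frobenius recursion), write $F=\bigl(\begin{smallmatrix} a & b\\ c & d\end{smallmatrix}\bigr)\circ\frac{u_{\log}}{u(s_2,x)}$; requiring that the loop monodromy, which is the conjugate of $\bigl(\begin{smallmatrix}1 & 2\pi\sqrt{-1}\\ 0 & 1\end{smallmatrix}\bigr)$ by $\bigl(\begin{smallmatrix} a & b\\ c & d\end{smallmatrix}\bigr)$, equal an element $\bigl(\begin{smallmatrix} s & t\\ \overline{t} & \overline{s}\end{smallmatrix}\bigr)$ of $\textup{PSU}(1,1)$ forces $a\neq 0$, $c\neq 0$ and $|a|=|c|$; then $F(x)=\frac{a}{c}\bigl(1+h(x)x^{m}\bigr)$ with $h(0)\neq 0$ and $|a/c|=1$, and along a sequence $x_n\to 0$ chosen so that $\arg\bigl(h(x_n)x_n^{m}\bigr)\in(-\pi/2,\pi/2)$ one gets $|F(x_n)|>1$, contradicting $F(\triangle^{\ast})\subset\mathbb{D}$. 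So the contradiction comes from the boundedness of $F$ combined with the specific shape of a parabolic in $\textup{PSU}(1,1)$, not from a claim that the monodromy is elliptic; you yourself flag this case as ``the main technical obstacle'' but supply no argument, so the conical half of your proof is incomplete.

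A secondary flaw is in the last step: you compute $F^{*}\mathrm{d}\sigma_0^2=(\mathfrak{L}_i\circ F)^{*}\mathrm{d}\sigma_0^2$ ``by $\textup{PSU}(1,1)$-invariance'', but $\mathfrak{L}_i$ is only known to lie in $\textup{PGL}(2,\mathbb{C})$, so this identity, and hence the exact normal forms you write down, are unjustified at the level of this lemma (those exact expressions are the content of the Main Theorem, which needs the elliptic/parabolic/hyperbolic trichotomy of Section 3). What the lemma needs is only the asymptotic behaviour: from $F=\mathfrak{L}_i^{-1}(z^{\alpha_i})$ one must check that $\mathfrak{L}_i^{-1}(0)\in\mathbb{D}$ (and, at a cusp, that the fixed point $\mathfrak{L}_j^{-1}(\infty)$ of the parabolic local monodromy lies on $\partial\mathbb{D}$) before concluding that $u-(\alpha_i-1)\ln|z|$, respectively $u+\ln|z|+\ln(-\ln|z|)$, extends continuously; your invariance shortcut skips this verification.
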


\begin{proof}
  We first show that there exists a neighborhood\ $U_{i}$ of\ $p_{i}$\ with complex coordinate\ $z$\ and some\ $\mathfrak{L}_{i}\in \textup{PGL}(2,\mathbb{C})$ such that\ $z(p_{i})=0$\ and\ $G_{i}=\mathfrak{L}_{i}\circ F$\ has the form of\ $z^{\alpha_{i}}$. Since\ $F$ is compatible with\ $\textup{D}$, we could choose a neighborhood\ $U_{i}$ of\ $p_{i}$\ and a complex coordinate\ $x$\ on\ $U_{i}$\ such that\ $x(p_{i})=0$\ and
\[
                \{F,x\}=\frac{1-\alpha_{i}^{2}}{2x^{2}}+\frac{d_{i}}{x}+\phi_{i}(x),
\]
where\ $\phi_{i}(x)$\ is holomorphic in\ $U_{i}$.
By {\textup{[2]}}, in the neighborhood\ $U_{i}$\ there are two linearly independent solutions\ $u_{1}$\ and\ $u_{2}$\ of the equation
\begin{equation}
 \frac{d^{2}u}{dx^{2}}+\frac{1}{2}\left(\frac{1-\alpha_{i}^{2}}{2x^{2}}+\frac{d_{i}}{x}+\phi_{i}(x)\right)u=0
\end{equation}
with single-valued coefficient such that\[
  F(x)=u_{2}(x)/u_{1}(x).
\]
Define an operator\ $L_{i}:=x^{2}\frac{d^{2}}{dx^{2}}+q_{i}(x)$ with\ $q_{i}(x)=\left((1-\alpha_{i}^{2})/2+d_{i}x+x^{2}\phi_{i}(x)\right)/2$. Then both\ $u_{1}$\ and\ $u_{2}$\ are solutions of the equation\ $L_{i}u=0$. Note that the equation\ $\textup{(1)}$\ has a regular singularity at 0. We could apply the Frobenius method to solve it. The indicial equation
\[
             f(s)=s(s-1)+\frac{1-\alpha_{i}^{2}}{4}=0
\]
of the differential equation\ $L_{i}u=0$\ at\ $x=0$\ has roots\ $s_{1}=(1-\alpha_{i})/2$ and\ $s_{2}=(1+\alpha_{i})/2$, and\ $s_{2}-s_{1}=\alpha_{i}>0$. Let\ $\sum_{k=0}^\infty b_{k}x^{k}$ be the power series expansion of\ $q_{i}(x)$, where\ $b_{0}=\frac{1-\alpha_{i}^{2}}{4}$. Let\ $s$\ be a parameter. Then
\[
         u(s,x)=x^{s}\sum_{k=0}^\infty c_{k}(s)x^{k}\ (c_{0}(s)=1)
\]
is a solution of\ $L_{i}u=0$\ if and only if the equation
\[
         (\sharp)_{n}:f(s+n)c_{n}+R_{n}=0
\]
holds for all\ $n=0,1,2,\cdots$, where\ $R_{0}=0$, and, for\ $n>0$,
\[
       R_{n}=R_{n}(c_{1},\cdots,c_{n-1},s)=\sum_{i=0}^{n-1}c_{i}b_{n-i}.
\]
Since\ $f(s_{2}+n)\neq0$ for all\ $n\geq1$, we find that\ $ u(s_{2},x)$\ is a solution of the equation. Note that\ $c_{n}(s)$ is a rational function of\ $s$.
Suppose that\ $\alpha_i=s_{2}-s_{1}$\ is not an integer. Then, by the same reasoning,\ $u(s_{1},x)$\ is another solution, which is linearly independent of\ $u(s_{2},x)$.
Suppose that\ $m:=\alpha_i=s_{2}-s_{1}$\ is an integer\ $\geq2$ and\ $R_{m}=0$. In which case, we can solve the equation\ $(\sharp)_{n}$ for\ $s=s_{1}$ for all\ $n\geq1$ by choosing\ $c_{m}$ arbitrarily and obtain another solution\ $u(s_{1},x)$ linearly independent of\ $u(s_{2},x)$.
Using the same argument as \cite[Lemma 3.2]{CWWX15}, we could complete the proof for the above two cases.
We need different argument to rule out the case that $m=s_{2}-s_{1}$\ is an integer\ $\geq2$ and\ $R_{m}\neq 0$, whose details occupy the next paragraph.

Suppose that $m=s_{2}-s_{1}$\ is an integer\ $\geq2$ and\ $R_{m}\neq 0$. Then, by \cite[p.23]{Y87}, the following expression
\[
     u_{\log}:=u(s_{2},x)\log x+x^{s_{1}}v(x)
\]
is a logarithmic solution£¬ where
$$ v(x)=-\frac{f'(s_{2})}{R_{m}}\sum_{k=0}^\infty c_{k}(s_{1})x^{k}+x^{m}\sum_{k=0}^\infty c'_{k}(s_{2})x^{k}.$$
Note that\ $f'(s_{2})\neq 0$,\ $v(x)$\ is holomorphic near\ $0$ and\ $v(0)=-\frac{f'(s_{2})}{R_{m}}\neq 0$.
Thus\ $u_{\log}$,\ $u(s_{2},x)$\ are the two linearly independent solutions of\ $L_{i}u=0$\ near\ $0$. We have
$$ F(x)=\frac{au_{\log}+bu(s_{2},x)}{cu_{\log}+du(s_{2},x)}=:\begin {pmatrix} a & b \\ c & d \end {pmatrix}\circ \frac{u_{\log}}{u(s_{2},x)},$$
where
$$\begin {pmatrix} a & b \\ c & d \end {pmatrix}\in \textup{PGL}(2,\mathbb{C}),\  \  \frac{u_{\log}}{u(s_{2},x)}=\frac{u(s_{2},x)\log x+x^{s_{1}}v(x)}{u(s_{2},x)}.$$
Since the monodromy of\ $F$\ belongs to\ $\textup{PSU(1,1)}$, we can choose a function element$$\mathfrak{F}=\begin {pmatrix} a & b \\ c & d \end {pmatrix}\circ \frac{u_{\log}}{u(s_{2},x)}$$ in an open disk which is near $p_i$ and does not contain\ $p_{i}$. Then there exists\ $\begin {pmatrix} s & t \\ \overline{t} & \overline{s} \end {pmatrix}\in \textup{PSU}(1,1)$ such that
$$ \begin{aligned}
  &\  \  \begin {pmatrix} s & t \\ \overline{t} & \overline{s} \end {pmatrix}\circ \mathfrak{F}\\&=\begin {pmatrix} a & b \\ c & d \end {pmatrix}\circ \frac{e^{2\pi\sqrt{-1}s_{2}}u(s_{2},x)(\log x+2\pi\sqrt{-1})+e^{2\pi\sqrt{-1}s_{1}}x^{s_{1}}v}{e^{2\pi\sqrt{-1}s_{2}}u(s_{2},x)}\\&=\left\{\begin {pmatrix} a & b \\ c & d \end {pmatrix}\begin {pmatrix} 1 & 2\pi\sqrt{-1} \\ 0 & 1 \end {pmatrix}{\begin {pmatrix} a & b \\ c & d \end {pmatrix}}^{-1}\right\}\circ \mathfrak{F}.
\end{aligned} $$
We arrive at
$$ \pm \begin {pmatrix} s & t \\ \overline{t} & \overline{s} \end {pmatrix}=\begin {pmatrix} 1-ac\cdot 2\pi\sqrt{-1} & a^{2}\cdot 2\pi\sqrt{-1}  \\ -c^{2}\cdot 2\pi\sqrt{-1} & 1+ac\cdot 2\pi\sqrt{-1} \end {pmatrix},$$
therefore,\ $a\neq0$,\ $c\neq0$ and\ $|c|=|a|$.
Furthermore,
$$\frac{u_{\log}}{u(s_{2},x)}=\frac{u(s_{2},x)\log x+x^{s_{1}}v}{u(s_{2},x)}=\log x+x^{-m}\varphi,$$
where
$$\varphi=\frac{-\frac{f'(s_{2})}{R_{m}}\sum_{k=0}^\infty c_{k}(s_{1})x^{k}+x^{m}\sum_{k=0}^\infty c'_{k}(s_{2})x^{k}}{\sum_{k=0}^\infty c_{k}(s_{2})x^{k}}.$$
Note that\ $\varphi(x)$\ is holomorphic near\ $0$ and\ $\varphi(0)=-\frac{f'(s_{2})}{R_{m}}\neq 0$. Then, we have
$$ \begin{aligned}
   F(x)&=\frac{a(\log x+x^{-m}\varphi)+b}{c(\log x+x^{-m}\varphi)+d}\\&=\frac{a}{c}-\frac{1}{c}\cdot\frac{1}{c(\log x+x^{-m}\varphi)+d}
   \\&=\frac{a}{c}\left[1-\frac{1}{ac(x^{m}\log x+\varphi)+ad\cdot x^{m}}\cdot x^{m}\right]\\&=\frac{a}{c}\left(1+h(x)\cdot x^{m}\right)
\end{aligned} $$
where
$$h(x)=-\frac{1}{ac(x^{m}\log x+\varphi)+ad\cdot x^{m}},\  \  \  \lim_{x\rightarrow0}h(x)=-\frac{1}{ac\varphi(0)}\neq0.$$
We could choose a sequence\ $\{x_{n}\}\to 0$ and $\alpha<\beta$ such that
\ $$  \arg{h(x_{n})}\in(\alpha,\,\beta),\quad  \arg {x_{n}} \in \bigg(\frac{-\beta}{m},\, \frac{-\alpha}{m}\bigg)\quad
\text{and}\quad 0<\beta-\alpha<\frac{\pi}{2}.$$
Then we have
\ $$\arg \Big(h(x_{n})\cdot x_{n}^{m}\Big)\in \bigg(-\frac{\pi}{2},\, \frac{\pi}{2}\bigg)\quad \text{and}\quad
\Re \big( h(x_{n})x_{n}^{m}\big)>0.$$
 Recalling the expression of\ $F$ and $|c|=|a|$, we have\ $|F(x_{n})|>1$, which contradicts that\ $F$ takes values in\ $\mathbb{D}$.

Summing up the above two paragraph, we have proved the first statement of the lemma near $p_i$.

We are now going to show there exists a neighborhood\ $U_{j}$ of\ $q_{j}$\ with complex coordinate\ $z$\ and some\ $\mathfrak{L}_{j}\in \textup{PGL}(2,\mathbb{C})$ such that\ $z(q_{j})=0$\ and\ $G_{j}=\mathfrak{L}_{j}\circ F$\ has the form\ $\log z$. Since\ $F$ is compatible with\ $\textup{D}$, we could choose a neighborhood\ $U_{j}$ of\ $q_{j}$\ and a complex coordinate\ $x$\ on\ $U_{j}$\ such that\ $x(q_{j})=0$\ and
\[
                \{F,x\}=\frac{1}{2x^{2}}+\frac{d_{j}}{x}+\phi_{j}(x),
\]
where\ $\phi_{j}(x)$\ is holomorphic in\ $U_{j}$. Analogue to the proof above, the indicial equation
\[
             f(s)=s(s-1)+\frac{1}{4}
\]
has roots\ $s_{1}=s_{2}=1/2$. The following expression
$$u_{\log}=u(s_{2},x)\log x+ x^{s_{2}}\sum_{k\geq 0}c_{k}'(s_{2})x^{k}$$
is a solution of\ $L_{j}u=0$, thus\ $u_{\log}$,\ $u(s_{2},x)$\ are the two linearly independent solutions of\ $L_{j}u=0$\ near\ $0$. We have
$$ F(x)=\frac{au_{\log}+bu(s_{2},x)}{cu_{\log}+du(s_{2},x)}=\begin {pmatrix} a & b \\ c & d \end {pmatrix}\circ \frac{u_{\log}}{u(s_{2},x)},$$
where
$$\begin {pmatrix} a & b \\ c & d \end {pmatrix}\in \textup{PGL}(2,\mathbb{C}), \  \ \frac{u_{\log}}{u(s_{2},x)}=\log x+\frac{\sum_{k=0}^\infty c_{k}'(s_{2})x^{k}}{\sum_{k=0}^\infty c_{k}(s_{2})x^{k}}=\log x+\psi(x),$$
\ $\psi(x)$ is holomorphic near\ $0$ and\ $\psi(0)=0$. Put\ $z=xe^{\psi(x)}$, then\ $\log z=\log x+\psi(x)$,
\ $F=\frac{a\log z+b}{c\log z+d}$.

By now, we have proved the first statement of the lemma. As long as the second statement is concerned,
since\ $F$\ is locally univalent on\ $\Sigma^{*}$ and has monodromy belonging to\ $\textup{{PSU}(1,1)}$,\ $F^{*}\mathrm{d} \sigma_{0}^2$\ is a well-defined smooth conformal hyperbolic metric on\ $\Sigma\backslash {\rm supp}\, D$. The first statement above proved just now implies that this metric has conical singularities at\ $p_{i}$\ with cone angle\ $2\pi\alpha_{i}$\ and cusp singularities at\ $q_{j}$.
\end{proof}

\begin{rem}
\label{rem:l>1}
  {\rm There exists an analogue of this lemma on the upper half-plane model $\mathbb{H}$, which is stated as follows and will be used next section.}

  Let\ $F:\Sigma\backslash {\rm supp}\, D\longrightarrow \mathbb{H}$ be a projective function compatible with
  $\textup{D}=\sum\limits_{i=1}^n(\alpha_{i}-1)p_{i}-\sum\limits_{j=1}^mq_{j}$
   such that the the monodromy of\ $F$\ lies in\ $\textup{PSL}(2,\mathbb{R})$. Then there exists a neighborhood\ $U_{i}$ of\ $p_{i}$\ with complex coordinate\ $z$\ and\ $\mathfrak{L}_{i}\in \textup{PGL}(2,\mathbb{C})$ such that\ $z(p_{i})=0$\ and\ $G_{i}=\mathfrak{L}_{i}\circ F$\ has the form\ $G_{i}(z)=z^{\alpha_{i}}$; there exists a neighborhood\ $U_{j}$ of\ $q_{j}$\ with complex coordinate\ $z$\ and\ $\mathfrak{L}_{j}\in \textup{PGL}(2,\mathbb{C})$, such that\ $z(q_{j})=0$\ and\ $G_{j}=\mathfrak{L}_{j}\circ F$\ has the form\ $G_{j}(z)=\log z$. Moreover the pullback $F^{*}\mathrm{d} \sigma_{0}^2$ of the hyperbolic metric $\mathrm{d} \sigma_{0}^2=\frac{|dz|^2}{(\Im z)^2}$  on ${\Bbb H}$ by $F$ on $\Sigma\backslash {\rm supp}\, D$ is a conformal hyperbolic metric representing $D$.
\end{rem}

\section{Proof of the main theorem}
\label{sec:Belyi}

\paragraph{}
We at first show the first statement of the theorem.
Let\ $\mathrm{d} \sigma^2$\ be a conformal hyperbolic metric on the punctured disk\ $\bigtriangleup^{\ast}=\{{w\in\mathbb{C}|0<\vert w \vert<1}\}$,\ $\mathrm{d} \sigma^2$\ has a conical singularity at\ $w=0$\ with cone angle\ $2\pi\alpha>0$. We will denote the singularity by\ $p$. By Lemma \ref{lem:pq}, there exists a projective function\ $F$\ from\ $\Delta^{\ast}$\ to the unit disc\ $\mathbb{D}$ such that the monodromy of\ $F$\ belongs to \textup{PSU(1,1)} and\ $\mathrm{d} \sigma^2=F^{*}(\mathrm{d} \sigma_{0}^2)$, where\ $\mathrm{d} \sigma_{0}^2=\frac{4|\mathrm{d} z|^2 }{(1-|z|^2)^2}$\ is the hyperbolic metric on\ $\mathbb{D}$. Thus we obtain the monodromy representation\ $\rho_{F}:\pi_{1}(\triangle^{\ast})\longrightarrow \textup{PSU(1,1)}$ of $F$, where $\pi_{1}(\triangle^{\ast})\cong \mathbb{Z}$. We denote\ $\rho_{F}(e)$\ by\ $\mathfrak{L}$, where\ $e$ means the homotopy class of the closed curve\ $t\mapsto\frac{1}{2}\exp(2\pi\sqrt{-1}t),\ t\in[0,1]$. By Lemma \ref{lem:NotEmp}, there exists a neighborhood\ $U$ of\ $p$\ with complex coordinate\ $\xi$\ and\ $\mathfrak{j}\in \textup{PGL}(2,\mathbb{C})$ such that\ $\xi(p)=0$\ and\ $J=\mathfrak{j}\circ F$\ has the form\ $J(\xi)=\xi^{\alpha}$, where\ $\alpha>0$. We divide the argument into the following two cases.

\begin{itemize}
 \item
Suppose that\ $\alpha$\ is an integer. Then the local monodromy of\ $F$\ at\ $p$\ is trivial. We set\ $n:=\alpha\in\mathbb{Z}_{>1}$. Then
$$F(\xi)=\frac{a\xi^{n}+b}{c\xi^{n}+d},\  \   \begin {pmatrix} a & b \\ c & d \end {pmatrix}\in \textup{PGL}(2,\mathbb{C}).$$
We can set\ $F(0)=0$ without loss of generality, then\ $b=0$,\ $d\neq0$,\ $F(\xi)=\frac{a}{c\xi^{n}+d}\xi^{n}$. Therefore we could choose another complex coordinate\ $z=z(\xi)$ of\ $U$\ under which\ $F=F(z)=z^{n}$. Since\ $z(p)=\xi(p)=0$, by continuity, there exists a neighborhood\ $\Delta_{\varepsilon}=\{{z\in\mathbb{C}|\vert z \vert<\varepsilon}\}\subset U$\ for some\ $\varepsilon>0$, so that, on\ $\Delta_{\varepsilon}$, we have
$$ \mathrm{d} \sigma^2|_{\Delta_{\varepsilon}}=\frac{4n^2\vert z \vert^{2n-2}}{(1-\vert z \vert ^{2n})^2}\vert \mathrm{d} z \vert^2.$$

 \item Suppose that\ $\alpha$\ is not an integer, Note that\ $\rho_{F}(e)=\mathfrak{L}\in \textup{PSU(1,1)}$.
 We divide the argument of the second case into the following three subcases.

  \begin{itemize}
          \item
 If\ $\mathfrak{L}$ is elliptic, then there exists\ $\mathfrak{K}\in \textup{PSU(1,1)}$ such that\ $\mathfrak{K}\circ\mathfrak{L}\circ\mathfrak{K^{-1}}(\xi)=e^{2\pi\sqrt{-1}\theta}\xi$ for some real number\ $\theta$. Since\ $F$ is a developing map of\ $\mathrm{d} \sigma^2$, so is \ $G=\mathfrak{K}\circ F$. Therefore we can choose a function element\ $\mathfrak{g}=\frac{a\xi^{\alpha}+b}{c\xi^{\alpha}+d}$ of\ $G$ with\ $ad-bc=1$\ in an open disk which is near $p$ and does not contain\ $p$. Then
$$ e^{2\pi\sqrt{-1}\theta}\mathfrak{g}= e^{2\pi\sqrt{-1}\theta} \frac{a\xi^{\alpha}+b}{c\xi^{\alpha}+d} = \frac{a e^{2\pi\sqrt{-1}\alpha}\xi^{\alpha}+b}{ce^{2\pi\sqrt{-1}\alpha} \xi^{\alpha}+d}.    $$
This is equivalent to the following equalities holding:
$$\left\{\  \begin{aligned} ac e^{2\pi\sqrt{-1}\alpha}(1-e^{2\pi\sqrt{-1}\theta}) &=0, \\
  (ade^{2\pi\sqrt{-1}\alpha}+bc)-e^{2\pi\sqrt{-1}\theta}(bce^{2\pi\sqrt{-1}\alpha}+ad) &=0, \\
  bd(1-e^{2\pi\sqrt{-1}\theta}) &=0. \end{aligned} \right.$$
Solving the system, we find that either\ $c=b=0$\ or\ $a=d=0$. If\ $a=d=0$, then\ $\mathfrak{g}=\frac{b}{c\xi^{\alpha}}$, but\ $G:\triangle^{\ast}\longrightarrow \mathbb{D}$, contradiction! Thus\ $c=b=0$, that is,\ $\mathfrak{g}(\xi)$\ equals\ $\mu \xi^{\alpha}(\mu\neq0)$,\ $\mathfrak{g}(0)=0$. Therefore we could choose another complex coordinate\ $z$ near\ $p$\ under which\ $\mathfrak{g}(z)= z^{\alpha}$. Since\ $z(p)=\xi(p)=0$, by continuity, there exists a neighborhood\ $\Delta_{\varepsilon}=\{{z\in\mathbb{C}|\vert z \vert<\varepsilon}\}$\ for some\ $\varepsilon>0$, so that, on\ $\Delta_{\varepsilon}$, we have
$$ \mathrm{d} \sigma^2|_{\Delta_{\varepsilon}}=\frac{4\alpha^2\vert z \vert^{2\alpha-2}}{(1-\vert z \vert ^{2\alpha})^2}\vert \mathrm{d} z \vert^2. $$

  \end{itemize}

In order to analyze parabolic and hyperbolic transformations, it will be more convenient to work in the upper half-plane model\ $\mathbb{H}$ of the hyperbolic plane than in\ $\mathbb{D}$.
There exists a projective function\ $F:\triangle^{\ast}\longrightarrow \mathbb{H}$\ on\ $\triangle^{\ast}$\ such that the monodromy of\ $F$\ belongs to\ $\textup{PSL}(2,\mathbb{R})$ and\ $\mathrm{d} \sigma^2=F^{*}(\mathrm{d} \sigma_{0}^2)$, where\
$\mathrm{d} \sigma_{0}^2=\frac{|\mathrm{d} z|^2 }{(\Im z)^{2}}$\ is the hyperbolic metric on\ $\mathbb{H}$. Thus we obtain the monodromy representation\ $\rho_{F}:\pi_{1}(\triangle^{\ast})\longrightarrow \textup{PSL}(2,\mathbb{R})$ of $F$,\ $\pi_{1}(\triangle^{\ast})\cong \mathbb{Z}$. We denote\ $\rho_{F}(e)$\  by\ $\mathfrak{L}$.

       \begin{itemize}
         \item
If\ $\mathfrak{L}$ is hyperbolic, then there exists\ $\mathfrak{K}\in \textup{PSL}(2,\mathbb{R})$ such that\ $\mathfrak{K}\circ\mathfrak{L}\circ\mathfrak{K^{-1}}(\xi)=\lambda\xi$ for some positive real number\ $\lambda$. Since\ $F$ is a developing map of\ $\mathrm{d} \sigma^2$, so is \ $G=\mathfrak{K}\circ F$.
Therefore we can choose a function element\ $\mathfrak{g}=\frac{a\xi^{\alpha}+b}{c\xi^{\alpha}+d}$ of\ $G$ with\ $ad-bc=1$\ in an open disk which is near $p$ and does not contain\ $p$. Then
$$\lambda\mathfrak{g}= \lambda \frac{a\xi^{\alpha}+b}{c\xi^{\alpha}+d} = \frac{a e^{2\pi\sqrt{-1}\alpha}\xi^{\alpha}+b}{ce^{2\pi\sqrt{-1}\alpha} \xi^{\alpha}+d}.  $$
This is equivalent to the following equalities holding:
$$\left\{\  \begin{aligned} ac e^{2\pi\sqrt{-1}\alpha}(1-\lambda) &=0, \\
  (ade^{2\pi\sqrt{-1}\alpha}+bc)-\lambda(bce^{2\pi\sqrt{-1}\alpha}+ad) &=0, \\
  bd(1-\lambda) &=0. \end{aligned} \right.$$
Since\ $\lambda$\ is a positive real number,\ $\lambda\neq1$ and\ $ad-bc=1$, there is no solution to this system, contradiction!

         \item
 If\ $\mathfrak{L}$ is parabolic, then there exists\ $\mathfrak{K}\in \textup{PSL}(2,\mathbb{R})$ such that\ $\mathfrak{K}\circ\mathfrak{L}\circ\mathfrak{K^{-1}}(\xi)=\xi+t$ for some real non-zero number\ $t$. Since\ $F$ is a developing map of\ $\mathrm{d} \sigma^2$, so is \ $G=\mathfrak{K}\circ F$.
Therefore we can choose a function element\ $\mathfrak{g}=\frac{a\xi^{\alpha}+b}{c\xi^{\alpha}+d}$ of\ $G$ with\ $ad-bc=1$\ in an open disk which is near $p$ and does not contain\ $p$. Then
$$ \mathfrak{g}+t= \frac{a\xi^{\alpha}+b}{c\xi^{\alpha}+d}+t= \frac{a e^{2\pi\sqrt{-1}\alpha}\xi^{\alpha}+b}{ce^{2\pi\sqrt{-1}\alpha} \xi^{\alpha}+d}.    $$
This is equivalent to the following equalities holding:
$$\left\{\  \begin{aligned} tc^{2} e^{2\pi\sqrt{-1}\alpha} &=0, \\
  ad+tcd-bc+(bc+tcd-ad)e^{2\pi\sqrt{-1}\alpha} &=0, \\
  td^{2} &=0. \end{aligned} \right.$$
Since\ $t\neq0$, we have\ $c=d=0$, but\ $ad-bc=1$, contradiction!

    \end{itemize}
 \end{itemize}

We show the uniqueness of the complex coordintae $z$ in the first statement of the theorem. Let\ $z$ and\ $\widetilde{z}$\ be coordinates such that conditions of the theorem are satisfied, then\ $F(z)=z^{\alpha}$,\ $\widetilde{F}(\widetilde{z})=\widetilde{z}^{\alpha}$\ are all developing maps of\ $ \mathrm{d} \sigma^2$. By lemma \ref{lem:contr}, there exists\ $\mathfrak{L}\in \textup{PSU(1,1)}$ such that\ $\widetilde{F}=\mathfrak{L}\circ F$, then\ $\widetilde{F}=\frac{aF+b}{\overline{b}F+\overline{a}}\ ,\ \ |a|^{2}-|b|^{2}=1$. Since\ $z(p)=\widetilde{z}(p)=0$, $F(p)=\widetilde{F}(p)=0$, we have\ $b=0$  by a calculation. Thus\ $\widetilde{F}=\frac{a}{\overline{a}}F=\mu F ,\ |\mu|=1$, then there exists an open disk\ $V$\ which is near $p$ and does not contain\ $p$ such that\ $\widetilde{z}^{\alpha}=\mu z^{\alpha}$. Therefore we have\ $\widetilde{z}=\lambda z$ on\ $V$ with\ $|\lambda|=1$. Since\ $z$,\ $\widetilde{z}$ and\ $w$ are coordinates near\ $p$,\ $z$ and\ $\widetilde{z}$\ are holomorphic functions of\ $w$, then\ $\widetilde{z}=\lambda z$,\ $|\lambda|=1$\ holds in a neighborhood of\ $p$.\\

What follows is the proof for the second statement of the theorem.
Let\ $\mathrm{d} \sigma^2$\ be a conformal hyperbolic metric on the punctured disk\ $\bigtriangleup^{\ast}=\{{w\in\mathbb{C}|0<\vert w \vert<1}\}$,\ $\mathrm{d} \sigma^2$\ has a cusp singularity at\ $w=0$. We will denote the singularity by\ $q$. By Lemma \ref{lem:pq}, there exists a projective function\ $F$\ from\ $\Delta^{\ast}$\ to the unit disc\ $\mathbb{D}$ such that the monodromy of\ $F$\ belongs to \textup{PSU(1,1)} and\ $\mathrm{d} \sigma^2=F^{*}(\mathrm{d} \sigma_{0}^2)$, where\
$\mathrm{d} \sigma_{0}^2=\frac{4|\mathrm{d} z|^2 }{(1-|z|^2)^2}$\ is the hyperbolic metric on\ $\mathbb{D}$. Thus we obtain the monodromy representation\ $\rho_{F}:\pi_{1}(\triangle^{\ast})\longrightarrow \textup{PSU(1,1)}$ of $F$,\ $\pi_{1}(\triangle^{\ast})\cong \mathbb{Z}$. We denote\ $\rho_{F}(e)$\  by\ $\mathfrak{L}$. By Lemma \ref{lem:NotEmp}, there exists a neighborhood\ $U$ of\ $q$\ with complex coordinate\ $\xi$\ and\ $\mathfrak{j}\in \textup{PGL}(2,\mathbb{C})$ such that\ $\xi(q)=0$\ and\ $J=\mathfrak{j}\circ F$\ has the form\ $J(\xi)=\log \xi$. The argument is divided into the following three cases.

 \begin{itemize}
          \item
 If\ $\mathfrak{L}$ is elliptic, then there exists\ $\mathfrak{K}\in \textup{PSU(1,1)}$ such that\ $\mathfrak{K}\circ\mathfrak{L}\circ\mathfrak{K^{-1}}(\xi)=e^{2\pi\sqrt{-1}\theta}\xi$ for some real number\ $\theta$. Since\ $F$ is a developing map of\ $\mathrm{d} \sigma^2$, so is\ $G=\mathfrak{K}\circ F$.
Therefore we can choose a function element\ $\mathfrak{g}=\frac{a\log\xi+b}{c\log\xi+d}$\ of\ $G$ with\ $ad-bc=1$\ in an open disk which is near $q$ and does not contain\ $q$. Then
$$ e^{2\pi\sqrt{-1}\theta}\mathfrak{g}= e^{2\pi\sqrt{-1}\theta} \frac{a\log\xi+b}{c\log\xi+d} = \frac{a (\log\xi+2\pi\sqrt{-1})+b}{c(\log\xi+2\pi\sqrt{-1})+d}.    $$
This is equivalent to the following equalities holding:
 $$\left\{\  \begin{aligned} ac(1-e^{2\pi\sqrt{-1}\theta}) &=0, \\
  (ac2\pi\sqrt{-1}+ad+bc)(1-e^{2\pi\sqrt{-1}\theta}) &=0, \\
  (ad-e^{2\pi\sqrt{-1}\theta}bc)2\pi\sqrt{-1}+(1-e^{2\pi\sqrt{-1}\theta})bd &=0. \end{aligned} \right.$$
Solving the system, we find that either\ $a=b=0$\ or\ $c=d=0$. But\ $ad-bc=1$, contradiction!

  \end{itemize}

  \begin{itemize}
         \item
 If\ $\mathfrak{L}$ is hyperbolic, then there exists\ $\mathfrak{K}\in \textup{PSL}(2,\mathbb{R})$ such that\ $\mathfrak{K}\circ\mathfrak{L}\circ\mathfrak{K^{-1}}(\xi)=\lambda\xi$ for some positive real number\ $\lambda$. Since\ $F$ is a developing map of\ $\mathrm{d} \sigma^2$, so is\ $G=\mathfrak{K}\circ F$.
Therefore we can choose a function element\ $\mathfrak{g}=\frac{a\log\xi+b}{c\log\xi+d}$ of\ $G$ with\ $ad-bc=1$\ in an open disk which is near $q$ and does not contain\ $q$. Then
$$ \lambda\mathfrak{g}= \lambda \frac{a\log\xi+b}{c\log\xi+d} = \frac{a(\log\xi+2\pi\sqrt{-1})+b}{c(\log\xi+2\pi\sqrt{-1})+d}.    $$
This is equivalent to the following equalities holding:
$$\left\{\  \begin{aligned} ac(1-\lambda) &=0, \\
  (ac2\pi\sqrt{-1}+ad+bc)(1-\lambda) &=0, \\
  \lambda b(c2\pi\sqrt{-1}+d)-d(a2\pi\sqrt{-1}+b) &=0. \end{aligned} \right.$$
Note that\ $\lambda$\ is a positive real number and\ $\lambda\neq1$. Solving the system, we find that either\ $a=b=0$\ or\ $c=d=0$. But\ $ad-bc=1$, contradiction!

         \item
 If\ $\mathfrak{L}$ is parabolic, then there exists\ $\mathfrak{K}\in \textup{PSL}(2,\mathbb{R})$ such that\ $\mathfrak{K}\circ\mathfrak{L}\circ\mathfrak{K^{-1}}(\xi)=\xi+t$ for some real non-zero number\ $t$. Since\ $F$ is a developing map of\ $\mathrm{d} \sigma^2$, so is\ $G=\mathfrak{K}\circ F$.
Therefore we can choose a function element\ $\mathfrak{g}=\frac{a\log\xi+b}{c\log\xi+d}$ of\ $G$ with\ $ad-bc=1$\ in an open disk which is near $q$ and does not contain\ $q$. Then
$$ \mathfrak{g}+t= \frac{a\log\xi+b}{c\log\xi+d}+t = \frac{a(\log\xi+2\pi\sqrt{-1})+b}{c(\log\xi+2\pi\sqrt{-1})+d}.    $$
This is equivalent to the following equalities holding:
$$\left\{\  \begin{aligned} tc^{2} &=0, \\
  2tc(c\pi\sqrt{-1}+d) &=0, \\
  td(c2\pi\sqrt{-1}+d)-2\pi\sqrt{-1} &=0. \end{aligned} \right.$$
Sinc\ $t\neq0$, we have\ $c=0$, $td^{2}=2\pi\sqrt{-1}$. By\ $ad-bc=1$, then\ $ad=1$,\ $t=a^{2}2\pi\sqrt{-1}$. Since\ $t$ is a real non-zero number, put\ $a^{2}=\sqrt{-1}\delta$£¬ where\ $\delta$ is a real non-zero number. Moreover, \ $\mathfrak{g}(\xi)=a^{2}\log\xi+ab=a^{2}(\log\xi+b/a)$, set\ $z=\xi e^{b/a}$, then\ $\log z=\log\xi+b/a$, \ $\mathfrak{g}(z)=a^{2}\log z=\sqrt{-1}\delta\log z$. If\ $\delta>0$, put\ $H= \begin {pmatrix} \frac{1}{\sqrt{\delta}} & 0 \\ 0 & \sqrt{\delta} \end {pmatrix}\circ G$, then\ $H(z)=\sqrt{-1}\log z$ should be a developing map, contradicts to\ $H:\triangle^{\ast}\longrightarrow \mathbb{H}$. Thus,\ $\delta<0$, put\ $H= \begin {pmatrix} \frac{1}{\sqrt{-\delta}} & 0 \\ 0 & \sqrt{-\delta} \end {pmatrix}\circ G$, then\ $H(z)=-\sqrt{-1}\log z$ is a developing map. Since\ $z(q)=\xi(q)=0$, by continuity, there exists a neighborhood\ $\Delta_{\varepsilon}=\{{z\in\mathbb{C}|\vert z \vert<\varepsilon}\}$\ for some\ $\varepsilon>0$, so that, on\ $\Delta_{\varepsilon}$, we have
$$ \mathrm{d} \sigma^2|_{\Delta_{\varepsilon}}=\vert z \vert ^{-2}(\ln|z|)^{-2}|dz|^{2}.   $$

 \end{itemize}

Finally, we show the uniqueness of the complex coordinate $z$ in the second statement of the theorem. Let\ $z$ and\ $\widetilde{z}$\ be coordinates such that conditions of the theorem are satisfied, then\ $F(z)=-\sqrt{-1}\log z$,\ $\widetilde{F}(\widetilde{z})=-\sqrt{-1}\log \widetilde{z}$ are all developing maps of\ $\mathrm{d} \sigma^2$. By lemma \ref{lem:contr}, there exists\ $\mathfrak{L}\in \textup{PSL}(2,\mathbb{R})$ such that\ $\widetilde{F}=\mathfrak{L}\circ F$, then\ $\widetilde{F}=\frac{aF+b}{cF+d}\ ,\ ad-bc=1$. Since\ $z(q)=\widetilde{z}(q)=0$,\ $F(q)=\widetilde{F}(q)=\infty$, we have\ $c=0$  by a calculation. Thus\ $\widetilde{F}=\frac{aF+b}{d}=a^{2}F+ab$, then\ $-\sqrt{-1}\log\widetilde{z}=-a^{2}\sqrt{-1}\log z+ab$, there exists an open disk\ $V$\ which is near $q$ and does not contain\ $q$\ such that\ $a^{2}=1$,\ $\log\widetilde{z}=\log z+ab\sqrt{-1}$. Therefore we have\ $\widetilde{z}=\lambda z$ on\ $V$ with\ $|\lambda|=1$. Since\ $z$,\ $\widetilde{z}$ and\ $w$ are coordinates near\ $q$,\ $z$ and\ $\widetilde{z}$\ are holomorphic functions of\ $w$, then\ $\widetilde{z}=\lambda z$,\ $|\lambda|=1$\ holds in a neighborhood of\ $q$.

\begin{center}
 {\bf Acknowledgements}
\end{center}

\noindent Xu would like to express his sincere gratitude to Professor Song Sun at University of California, Berkeley for the stimulating conversations in the summer of 2016, which motivated this manuscript.
Xu is supported in part by the National Natural Science Foundation of China (grant no. 11571330), and both Shi and Xu are supported in part by
the Fundamental Research Funds for the Central Universities.




\end{document}